\newtheorem{theorem}{Theorem}[section]
\newtheorem{lemma}[theorem]{Lemma}
\theoremstyle{definition}
\providecommand{\id}{\mathsf{id}}
\renewcommand{\neg}{\mathsf{neg}}
\providecommand{\tNAE}{\text{3NAE}}
\DeclareMathOperator{\dom}{dom}
\title{Aggregation of evaluations without unanimity}
\author{Yuval Filmus\footnote{Technion Israel Institute of Technology. Research supported by ISF grant 507/24.}}
\begin{document}

\maketitle

\begin{abstract}
Dokow and Holzman determined which predicates over $\{0, 1\}$ satisfy an analog of Arrow's theorem: all unanimous aggregators are dictatorial. Szegedy and Xu, extending earlier work of Dokow and Holzman, extended this to predicates over arbitrary finite alphabets.

Mossel extended Arrow's theorem in an orthogonal direction, determining all aggregators without the assumption of unanimity. We bring together both threads of research by extending the results of Dokow--Holzman and Szegedy--Xu to the setting of Mossel.
As an application, we determine, for each symmetric predicate over $\{0,1\}$, all of its aggregators.
\end{abstract}

\section{Introduction}
\label{sec:introduction}

Arrow's impossibility theorem~\cite{Arrow50,Arrow63} is a classical result in social choice theory. There are $n$ voters who rank $m \ge 3$ candidates. Their votes are aggregated by a social choice function to produce a consensus ordering of the candidates. The theorem states that the only social choice function satisfying two natural properties, independence of irrelevant alternatives (IIA) and Pareto efficiency, is a dictatorship.

Concretely, in the case of $m = 3$ candidates $A,B,C$, we can represent voter $i$'s vote as a binary vector $(x_i,y_i,z_i) \in \{0,1\}^3$, where $x_i$ encodes whether the voter prefers $A$ or $B$ ($1$ or $0$, respectively), $y_i$ encodes whether they prefer $B$ or $C$, and $z_i$ encodes whether they prefer $C$ or $A$. Since $x_i,y_i,z_i$ correspond to an actual ordering, they must satisfy the following property:
\[
 (x_i,y_i,z_i) \in P_{\tNAE} := \{(0,0,1),(0,1,0),(0,1,1),(1,0,0),(1,0,1),(1,1,0)\}.
\]
The aggregated ordering should satisfy the same property. (Here NAE stands for Not-All-Equal.)

The IIA assumption states that the votes are aggregated ``item by item'', using three functions $f,g,h\colon\allowbreak \{0,1\}^n \to \{0,1\}$. If $x,y,z \in \{0,1\}^n$ represent the individual votes, then the aggregated ordering is $(f(x),g(y),h(z))$. Pareto efficiency states that if all voters agree on an item, then the aggregated ordering concurs. This means that $f(0,\dots,0) = g(0,\dots,0) = h(0,\dots,0) = 0$ and $f(1,\dots,1) = g(1,\dots,1) = h(1,\dots,1) = 1$.

We can now state Arrow's theorem in the case of three candidates:

\begin{theorem} \label{thm:arrow}
Suppose that $f,g,h\colon \{0,1\}^n \to \{0,1\}$ satisfy the following two conditions:
\begin{itemize}
    \item Polymorphicity:\footnote{In the literature on social choice theory, the functions $f,g,h$ are considered as a single function $(P_{\tNAE})^n \to P_{\tNAE}$ called an \emph{aggregator}.} Whenever $x,y,z \in \{0,1\}^n \to \{0,1\}$ are such that $(x_i,y_i,z_i) \in P_{\tNAE}$ for all $i$ then $(f(x),g(y),h(z)) \in P_{\tNAE}$.
    \item Pareto efficiency:\footnote{In the literature many other names are used, such as unanimity, idempotence, constancy, faithfulness, systematicity.} $f(b,\dots,b) = g(b,\dots,b) = h(b,\dots,b) = b$ for $b \in \{0, 1\}$.
\end{itemize}
Then there is a coordinate $j$ such that $f(x) = g(x) = h(x) = x_j$.
\end{theorem}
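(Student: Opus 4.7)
The plan is to first use specializations of the input triples to show that $f = g = h$ and that this common function is self-dual, i.e.\ $f(\bar x) = 1 - f(x)$, and then to invoke the classical ultrafilter argument to conclude that $f$ is a dictator. The key observation for the first step is that the NAE predicate at a coordinate is satisfied automatically as soon as any two of the three entries disagree. In particular, setting $y = \bar x$ makes the coordinatewise NAE constraint vacuous on $z$, so $(f(x), g(\bar x), h(z)) \in P_{\tNAE}$ for all $x, z$. If there existed some $x_0$ with $f(x_0) = g(\bar x_0)$, then $h$ would be forced to be constant, violating Pareto efficiency; hence $g(\bar x) = 1 - f(x)$, i.e.\ $g(x) = 1 - f(\bar x)$. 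By the cyclic symmetry of the three coordinates of $P_{\tNAE}$, the analogous substitutions $z = \bar y$ and $x = \bar z$ yield $h(y) = 1 - g(\bar y)$ and $f(z) = 1 - h(\bar z)$; composing these three identities gives $h = f$, $g = f$, and $f(\bar x) = 1 - f(x)$.

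Next I would introduce the family $\mathcal{F} := \{S \subseteq [n] : f(1_S) = 1\}$ of ``winning coalitions'' (where $1_S$ denotes the characteristic vector of $S$) and argue that it is an ultrafilter on $[n]$. Pareto efficiency gives $\emptyset \notin \mathcal{F}$ and $[n] \in \mathcal{F}$, while self-duality gives the complementation property $S \in \mathcal{F} \iff S^c \notin \mathcal{F}$. Closure under intersection follows by applying $f$ to the triple $(1_S, 1_T, 1_{(S \cap T)^c})$: a short case analysis shows that at every coordinate the resulting entries lie in $P_{\tNAE}$, so $(1, 1, f(1_{(S \cap T)^c})) \in P_{\tNAE}$, which forces $f(1_{(S \cap T)^c}) = 0$ and hence $S \cap T \in \mathcal{F}$ by self-duality. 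Upward closure then comes for free: if $S \in \mathcal{F}$ and $S \subseteq T \notin \mathcal{F}$, then $T^c \in \mathcal{F}$, whence $\emptyset = S \cap T^c \in \mathcal{F}$, contradicting $f(0,\dots,0) = 0$. Since any ultrafilter on a finite set is principal, $\mathcal{F} = \{S : j \in S\}$ for some $j$, so $f(x) = x_j$, and hence $f(x) = g(x) = h(x) = x_j$.

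I expect the main obstacle to be isolating the reductions performed in the first step: the availability of the substitution $y = \bar x$ is a very special feature of $P_{\tNAE}$ (which is essentially $\{0,1\}^3 \setminus \{(0,0,0),(1,1,1)\}$), and the corresponding ``collapsing'' step for general predicates---especially over larger alphabets and without Pareto efficiency---is precisely the difficulty that Dokow--Holzman, Szegedy--Xu, and Mossel had to address, and that the present paper unifies.
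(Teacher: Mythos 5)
Your proof is correct, but it takes an entirely different and much more elementary route than the paper does. The paper never proves \Cref{thm:arrow} directly; it obtains it as a corollary of \Cref{thm:arrow-mossel}, which in turn follows from \Cref{thm:symmetric} (the complete classification of symmetric binary predicates), which rests on \Cref{thm:main} (the reduction to $n=2$) and \Cref{thm:reduction}. In other words, the paper's route is heavy machinery applied to a special case, while yours is a self-contained ad hoc argument for $P_{\tNAE}$ alone.

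Your two-step structure is the classical ``ultrafilter'' (Kirman--Sondermann-style) proof of Arrow for three alternatives, adapted cleanly to the $P_{\tNAE}$ formulation. Step~1 exploits the fact that, for this particular predicate, fixing $y=\bar x$ makes the coordinatewise constraint vacuous in $z$; combined with Pareto efficiency and the full symmetry of $P_{\tNAE}$ this pins down $f=g=h$ and forces self-duality. Step~2 then checks that the winning coalitions form an ultrafilter: the triple $(1_S,1_T,1_{(S\cap T)^c})$ is a legitimate input (each coordinate is one of $(1,1,0),(1,0,1),(0,1,1),(0,0,1)$, all NAE), which yields intersection-closure, and the remaining filter axioms come from Pareto and self-duality as you note. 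Finiteness then gives principality. All steps check out.

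The trade-off is exactly the one you flag at the end: your substitution $y=\bar x$ has no analogue for general predicates $P$ (which need not be closed under complementation, or even binary), and both self-duality and the identification $f=g=h$ fail in general. The paper's machinery is designed precisely to avoid these shortcuts and to handle arbitrary non-degenerate predicates, arbitrary alphabets, and no unanimity hypothesis; obtaining \Cref{thm:arrow} as a corollary is then almost free (from \Cref{thm:arrow-mossel}, Pareto efficiency immediately rules out the anti-dictator and constant cases). So your proof is shorter and conceptually transparent for this specific theorem, but does not generalize, which is the whole point of the paper.
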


Rubinstein and Fishburn~\cite{RubinsteinFishburn86} asked what happens for other predicates $P$, possibly over non-binary domains. 
The first step in answering this question was taken by Nehring and Puppe~\cite{NehringPuppe02}, who determined all binary predicates for which all monotone polymorphisms are dictatorial. Dokow and Holzman~\cite{DokowHolzman10a} completely resolved the problem for binary predicates by determining all binary predicates for which all Pareto efficient polymorphisms are dictatorial (they called such predicates \emph{impossibility domains}). They also made progress on the problem for predicates over arbitrary finite alphabets~\cite{DokowHolzman10c}. Szegedy and Xu~\cite{SzegedyXu15} completely resolved the problem (for both supportiveness and unanimity) using methods of universal algebra. In contrast to the work of Dokow and Holzman, which gave explicit criteria, Szegedy and Xu showed that the case of arbitrary $n$ follows from an appropriate base case.
Other relevant results include~\cite{Wilson72,FishburnRubinstein86,ListPettit02,DietrichList07,DokowHolzman09,DokowHolzman10b,DietrichList13,Gibbard14}.

In universal algebra~\cite{Geiger68}, a function $f\colon \Sigma^n \to \Sigma$ is a polymorphism of a predicate $P \subseteq \Sigma^m$ if whenever $(x^{(1)}_i,\dots,x^{(m)}_i) \in P$ for all $i$, we have $(f(x^{(1)}), \dots, f(x^{(m)})) \in P$. This differs from the definition in \Cref{thm:arrow}, which allows different functions for different coordinates. We can recover the more general definition by assigning each coordinate a different \emph{sort}, and asking for \emph{multi-sorted} polymorphisms~\cite{BulatovJeavons03}. We adopt the term \emph{polymorphism}, which for us always signifies a multi-sorted polymorphism.

The universal algebra definition does not require the functions to be Pareto efficient. Mossel~\cite{Mossel09,Mossel12} proved a version of Arrow's theorem in this setting.\footnote{Mossel states his result for any number of candidates, but his more general result simply states that if we restrict attention to any three candidates, then the corresponding social choice functions behave as in the theorem.}

\begin{theorem}[Mossel] \label{thm:arrow-mossel}
Suppose that $f,g,h\colon \{0,1\}^n \to \{0,1\}$ are a polymorphism of $P_{\tNAE}$. Then one of the following cases holds:
\begin{itemize}
\item There exists $j$ such that $f(x) = g(x) = h(x) = x_j$.
\item There exists $j$ such that $f(x) = g(x) = h(x) = \bar{x}_j$ (i.e., $1 - x_j$).
\item One of $f,g,h$ is the constant~$0$ function, and another one is the constant~$1$ function.
\end{itemize}
\end{theorem}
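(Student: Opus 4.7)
My plan is to reduce \Cref{thm:arrow-mossel} to Arrow's theorem (\Cref{thm:arrow}) via a case analysis on the boundary behaviour of $f, g, h$ at the constant inputs. Write $\vec{b}$ for the constant vector $(b, \dots, b) \in \{0,1\}^n$, and define $F, G, H \colon \{0,1\} \to \{0,1\}$ by $F(b) = f(\vec{b})$, and similarly for $G, H$. Specializing the polymorphism condition to inputs $(x, y, z) = (\vec{u}, \vec{v}, \vec{w})$ for $(u, v, w) \in P_{\tNAE}$ shows that $(F, G, H)$ is itself a polymorphism of $P_{\tNAE}$ on single bits. Since each of $F, G, H$ lies in $\{\id, \neg\} \cup \{\text{constants}\}$, a short enumeration shows the valid configurations are exactly: (i) $(F, G, H) \in \{(\id, \id, \id), (\neg, \neg, \neg)\}$; (ii) exactly two of $F, G, H$ are constants taking opposite values and the third is $\id$ or $\neg$; or (iii) all three are constants forming an element of $P_{\tNAE}$.

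In case (i) with $(\id, \id, \id)$, the triple $(f, g, h)$ is Pareto efficient and \Cref{thm:arrow} yields $f = g = h = x_j$. For $(\neg, \neg, \neg)$, note that global complementation is an automorphism of $P_{\tNAE}$, so $(\bar f, \bar g, \bar h)$ is also a polymorphism and is now Pareto efficient; Arrow then gives $\bar f = \bar g = \bar h = x_j$, i.e., $f = g = h = \bar{x}_j$.

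In cases (ii) and (iii), the goal is to show two of $f, g, h$ are the constant functions $0$ and $1$. The key mechanism is: whenever we can find $v \ne w$ with $G(v) = H(w) = 1$, the input $(x, \vec{v}, \vec{w})$ is column-NAE for every $x$, and $(f(x), 1, 1) \in P_{\tNAE}$ forces $f \equiv 0$; symmetrically, two $0$'s force $f \equiv 1$. In case (ii), one such application directly pins down each of the two boundary-constant functions as the appropriate constant, while the remaining function (whose $F$ is $\id$ or $\neg$) stays arbitrary. In case (iii), say $(F, G, H) = (0, 0, 1)$, we first take $x = \vec{1}, y = \vec{0}$ with $z$ arbitrary to obtain $(0, 0, h(z)) \in P_{\tNAE}$, forcing $h \equiv 1$. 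The polymorphism condition then reduces to $(f(x), g(y), 1) \in P_{\tNAE}$, and plugging in the always-valid input $(x, y, \bar{x})$ gives $f(x) g(y) = 0$ for all $x, y$, hence $f \equiv 0$ or $g \equiv 0$. Other sub-cases of (iii) follow by the obvious symmetries of $P_{\tNAE}$ (permuting coordinates and global negation).

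The main obstacle is case (iii), since knowing that $F$ is a constant $a$ does not on its own force $f$ to be the constant function $a$ on all of $\{0,1\}^n$. The resolution is to construct auxiliary inputs that remain column-NAE for \emph{every} $x$, such as $(x, y, \bar{x})$, which converts boundary-level constraints into pointwise ones and allows the constancy of two of the functions to be read off.
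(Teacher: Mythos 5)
Your argument is correct, but it takes a genuinely different route from the paper. The paper does not prove \Cref{thm:arrow-mossel} directly: it cites Mossel for the result and then notes that it is a special case of \Cref{thm:symmetric}, which is itself derived from the general reduction machinery of \Cref{thm:main} and \Cref{thm:reduction} (reduce to $n=2$, then to $n=1$, then inspect the exceptional cases for the symmetric predicate $P_{\tNAE}$). You instead give a short direct reduction to Arrow's theorem (\Cref{thm:arrow}): restrict to the diagonal to get a one-bit polymorphism $(F,G,H)$, enumerate the possibilities, handle the $(\id,\id,\id)$ and $(\neg,\neg,\neg)$ cases by Arrow (using that complementation is an automorphism of $P_{\tNAE}$), and in the remaining cases use the observation that inputs of the form $(\vec u,\cdot,\vec w)$ with $u\neq w$, and more generally $(x,y,\bar x)$, are automatically column-NAE, so boundary constraints propagate to pointwise constancy. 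I verified the enumeration (the key fact that two bijections plus one constant is never a polymorphism of $P_{\tNAE}$, so the only mixed configurations have exactly one bijection and two opposite constants, or three constants forming an NAE triple) and the forcing arguments in cases (ii) and (iii); they are all sound. Your proof is more elementary and self-contained (modulo the classical Arrow theorem), and it is closer in spirit to Mossel's original argument; what it does not give, and what the paper's route is designed to provide, is the general structural theory that yields the same conclusion uniformly for all symmetric predicates and all $\Phi$, not just $P_{\tNAE}$.
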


In words, every polymorphism of $P_{\tNAE}$ is either dictatorial (all functions depend on a single coordinate) or a ``certificate'', meaning that the polymorphism fixes a subset of the coordinates to certain values which guarantee that $P_{\tNAE}$ is satisfied.

\medskip

In this paper, our goal is to prove an analog of the results of Dokow--Holzman and Szegedy--Xu in the setting of Mossel's result, that is, without assuming Pareto efficiency.

A formalization of our main results in \texttt{Lean4} can be found at \url{https://github.com/YuvalFilmus/Polymorphisms/}.

\subsection*{Setup}

\paragraph{Predicates} We will consider predicates $P \subseteq \Sigma_1 \times \cdots \times \Sigma_m$, where $\Sigma_1,\dots,\Sigma_m$ are finite sets of size at least~$2$. 

We assume that $P$ is \emph{non-degenerate}:
\begin{itemize}
    \item For each $i \in [m]$ and each $\sigma \in \Sigma_i$ there is $y \in P$ such that $y_i = \sigma$.

    Equivalently, the projection of $P$ to the $i$'th coordinate is $\Sigma_i$.

    \item $P$ depends on all coordinates, meaning that for all $i \in [m]$ there exist $y \in P$ and $z \notin P$ that differ only in the $i$'th coordinate.
\end{itemize}

\paragraph{Polymorphisms}

Functions $f_i\colon \Sigma_i^n \to \Sigma_i$ constitute a \emph{polymorphism} of $P$ if whenever $x^{(i)} \in \Sigma_i^n$ are such that $(x^{(1)}_j,\dots,x^{(m)}_j) \in P$ for all $j \in [n]$, then also $(f_1(x^{(1)}), \dots, f_m(x^{(m)})) \in P$.

\paragraph{Triviality}

Dokow and Holzman~\cite{DokowHolzman10a,DokowHolzman10c} define an $m$-ary predicate $P$ to be an \emph{impossibility domain} if whenever $f_1,\dots,f_m$ is a Pareto efficient polymorphism of $P$, there is a coordinate $j$ such that $f_i(x) = x_j$ for all $i \in [m]$. If we allow polymorphisms that are not Pareto efficient then we need to allow other ``trivial'' types of polymorphisms, such as certificates.

There are many possible definitions of trivial polymorphisms. We will use the following quite general one. Let $\Phi$ be a collection of tuples $(\phi_1,\dots,\phi_m)$, where $\phi_i\colon \Sigma_i \to \Sigma_i$.
A non-degenerate predicate $P$ is \emph{$\Phi$-trivial} if every $n$-ary polymorphism $f_1,\dots,f_m$ of $P$ has one of the following two types:
\begin{itemize}
    \item Dictatorial type: There exist $j \in [n]$ and $(\phi_1,\dots,\phi_m) \in \Phi$ such that $f_i(x) = \phi_i(x_j)$.
    \item Certificate type: There exists a subset $S \subseteq [m]$ and an assignment $\rho_i \in \Sigma_i$ for each $i \in S$ such that:
    \begin{itemize}
        \item $\rho$ is a \emph{certificate} for $P$: every $y \in \Sigma_1 \times \cdots \times \Sigma_m$ which agrees with $\rho$ on $S$ belongs to $P$.
        \item $f_1,\dots,f_m$ \emph{conform} to $\rho$: $f_i$ is the constant $\rho_i$ function for every $i \in S$.
    \end{itemize}
    We think of $\rho$ as a partial function, and so identify $S$ with its domain $\dom\rho$.
\end{itemize}

Here are several illustrative examples of the definition:
\begin{enumerate}
    \item $\Phi = \{ (\id_{\Sigma_1}, \dots, \id_{\Sigma_m}) \}$, where $\id_\Sigma$ is the identity function on $\Sigma$.

    In this case, the only dictators we allow are $f_1(x) = \cdots = f_m(x) = x_j$.
\end{enumerate}

For the remaining examples, we assume that all coordinates are binary: $\Sigma_i = \{0, 1\}$.

\begin{enumerate}[resume]
    \item $\Phi = \{ (\id,\dots,\id), (\neg,\dots,\neg) \}$, where $\neg(x) = \bar{x}$.

    In this case, we also allow dictators of the form $f_1(x) = \cdots = f_m(x) = \bar{x}_j$.

    \item $\Phi = \{ \id, \neg \}^m$.

    In this case, we allow dictators of the form $f_i(x) = x_j \oplus b_i$ for any $b \in \{0, 1\}^m$.

    \item $\Phi = \{ 0, 1, \id, \neg \}^m$.

    In this case, we allow dictators of the form $f_i(x) \in \{0, 1, x_j, \bar{x}_j\}$.
\end{enumerate}

We say that $P$ is $\Phi$-trivial \emph{for $n = n_0$} if the definition above holds for $n_0$-ary polymorphisms.

\subsection*{Results}
\label{sec:results}

Our main result is a reduction to the case $n = 2$, which echoes one of the main results of Szegedy and Xu~\cite{SzegedyXu15}.

\begin{restatable}{theorem}{thmmain} \label{thm:main}
A non-degenerate predicate $P$ is $\Phi$-trivial iff it is $\Phi$-trivial for $n = 2$.
\end{restatable}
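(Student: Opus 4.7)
The easy direction is immediate from the definition: if every polymorphism of every arity is trivial, then in particular so is every $2$-ary polymorphism. For the substantive direction, my plan is to induct on $n \ge 2$, with base case $n = 2$ supplied by the hypothesis. The key tool is that any surjection $\pi\colon [n] \to [n']$ with $n' < n$ converts an $n$-ary polymorphism $(f_1,\dots,f_m)$ of $P$ into an $n'$-ary polymorphism by precomposing each $f_i$ with the pullback map $(y_1,\dots,y_{n'}) \mapsto (y_{\pi(1)},\dots,y_{\pi(n)})$. Setting $n' = n-1$ invokes the inductive hypothesis, while setting $n' = 2$ invokes the base case, so both are available at every stage.

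For the inductive step ($n \ge 3$), I would take an $n$-ary polymorphism $(f_1,\dots,f_m)$ of $P$ and apply the inductive hypothesis to each of the $(n-1)$-ary ``diagonals'' $f^{(k)}_i(x_1,\dots,x_{n-1}) := f_i(x_1,\dots,x_k,x_k,x_{k+1},\dots,x_{n-1})$ obtained by duplicating coordinate $k$, for $k \in [n-1]$. The proof would then split on whether at least one $f^{(k)}$ is of certificate type, or all of them are of dictatorial type. In the all-dictatorial case, each $f^{(k)}$ is a dictator on some coordinate $j_k$ with some tuple $(\phi^{(k)}_i) \in \Phi$; I would compare the constraints that $f^{(k)}$ and $f^{(k+1)}$ impose on inputs lying in both of their diagonals (namely inputs with $x_k = x_{k+1} = x_{k+2}$) to force $(\phi^{(k)}_i) = (\phi^{(k+1)}_i)$ and $j_k, j_{k+1}$ to point to a common original coordinate, eventually yielding a single $j^* \in [n]$ and $(\phi_i) \in \Phi$ with $f_i(x) = \phi_i(x_{j^*})$. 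In the certificate case, if $f^{(k)}$ has certificate $(S,\rho)$ then $f_i$ is constantly $\rho_i$ on $\{x \in \Sigma_i^n : x_k = x_{k+1}\}$ for each $i \in S$; I would propagate this to all of $\Sigma_i^n$ using further identifications, and then verify via non-degeneracy that $\rho|_S$ is a certificate of $P$.

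The main obstacle I anticipate is the certificate-propagation step. A single diagonal identification only pins $f_i$ down on a codimension-one slice of $\Sigma_i^n$, so stitching together enough slices to cover the whole space takes some care; in particular I expect to invoke the $2$-ary base-case reductions on bi-partitions of $[n]$ to fill in the inputs whose coordinates are all distinct, and to argue that the various constants thus obtained must coincide. Equally subtle is showing that the resulting $\rho|_S$ is a bona-fide certificate of $P$: this is not automatic from $(f_1,\dots,f_m)$ being a polymorphism, and I would use non-degeneracy to produce, for any hypothetical completion of $\rho|_S$ outside $P$, an assignment to the coordinates not in $S$ that witnesses a failure of the polymorphism property.
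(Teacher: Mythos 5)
Your approach and the paper's are genuinely different: you reduce arity by \emph{identifying} coordinates (diagonals, bi-partitions), whereas the paper reduces arity by \emph{fixing} the last coordinate. The distinction matters, and I think your route as stated has a real gap.

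An identification $\pi\colon [n]\to[n']$ only tells you about $f_i$ on the image of $\pi^*$, i.e.\ on inputs $x\in\Sigma_i^n$ that take at most $n'$ distinct values. Your $(n-1)$-ary diagonals therefore only pin $f_i$ down on inputs with at least one repeated coordinate, and your fall-back --- bi-partitions of $[n]$ --- is worse, covering only inputs taking at most two distinct values. When $|\Sigma_i|\ge n$ (or even $|\Sigma_i|\ge 3$, $n\ge 3$ for the bi-partition step), a positive fraction of the domain of $f_i$ is seen by \emph{no} identification, so the ``stitching'' you anticipate in the certificate-propagation step, and equally in the all-dictatorial step, simply has no raw material to work with on those inputs. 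The polymorphism condition does constrain $f_i$ there, but not in a way that identifications can see, and your sketch gives no alternative handle; this is not a routine completion but the heart of the matter.

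The paper instead slices by the last argument: it sets $f_i|_\sigma(x_1,\dots,x_n):=f_i(x_1,\dots,x_n,\sigma)$. These slices tile the whole domain of $f_i$, so no input is lost. The price is that an individual slice is \emph{not} automatically a polymorphism; only the coherent tuple $(f_1|_{y_1},\dots,f_m|_{y_m})$ for $y\in P$ is (and by non-degeneracy every $\sigma\in\Sigma_i$ occurs as some $y_i$). The inductive hypothesis then classifies each such tuple, and the crux is an auxiliary $2$-ary polymorphism $(g_1,\dots,g_m)$ (the paper's \Cref{lem:main-g}) built so that $g_i(\sigma,\cdot)$ records the ``type'' of $f_i|_\sigma$; applying the $n=2$ base case to $g$ is what reconciles the a priori incompatible dictator coordinates and certificate domains coming from different $y\in P$ --- exactly the consistency problem your plan defers to a later ``stitching'' step. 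If you want to pursue your line, you would need to replace the bi-partition idea with an argument that reaches inputs of high colour-multiplicity; the natural fix is precisely to switch from identifying coordinates to fixing one, at which point you are rederiving the paper's proof.
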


We prove \Cref{thm:main} in \Cref{sec:main} by a simple induction.

\medskip

The case $n = 2$ could be hard to check by hand. In order to facilitate this, we prove a further reduction to the case $n = 1$, under an additional assumption on $\Phi$.

\begin{restatable}{theorem}{thmreduction} \label{thm:reduction}
Let $P \subseteq \Sigma_1 \times \cdots \times \Sigma_m$ be a non-degenerate predicate, where $|\Sigma_i| \geq 2$ for all $i$. Let $\Phi$ be such that for all $(\phi_1,\dots,\phi_m) \in \Phi$ and for all $i$, the function $\phi_i$ is a permutation of $\Sigma_i$.

If $P$ is $\Phi$-trivial for $n = 1$ then it is also $\Phi$-trivial for $n = 2$, unless one of the following cases holds:
\begin{enumerate}
    \item There is a coordinate $i \in [m]$ and $\sigma \in \Sigma_i$ such that $P$ is \emph{closed under setting $i$ to $\sigma$}, meaning that whenever $y \in P$ then also $y|_{i \gets \sigma} \in P$, where $y|_{i \gets \sigma}$ is obtained from $y$ by changing $y_i$ to $\sigma$.

    \item $P$ has a non-dictatorial \emph{AND/OR} polymorphism.

    An AND/OR polymorphism is a polymorphism $f_1,\dots,f_m$ where $f_i\colon \Sigma_i^2 \to \Sigma_i$ is as follows. If $|\Sigma_i| > 2$ then $f_i(x) = x_1$. If $|\Sigma_i| = 2$, without loss of generality $\Sigma_i = \{0, 1\}$, then either $f_i(x) = x_1 \land x_2$ or $f_i(x) = x_1 \lor x_2$.
    
    \item $P$ has a \emph{Latin square} polymorphism \emph{conforming} to $\Phi$.

    A Latin square polymorphism is a polymorphism $f_1,\dots,f_m$ where each $f_i\colon \Sigma_i^2 \to \Sigma_i$ is a Latin square: if we view it as a $\Sigma_i \times \Sigma_i$ matrix, then each row and each column is a permutation of $\Sigma_i$.

    The functions $f_1,\dots,f_m$ conform to $\Phi$ if for all $y \in P$ we have
    \[
     (f_1(y_1,\cdot),\dots,f_m(y_m,\cdot)), (f_1(\cdot,y_1),\dots,f_m(\cdot,y_m)) \in \Phi.
    \]
    Here $f_i(y_i,\cdot)$ is the function that takes $\sigma$ to $f_i(y_i,\sigma)$.
\end{enumerate}

Furthermore, if $\Sigma_1 = \cdots = \Sigma_m = \{0, 1\}$ and $P$ is not $\Phi$-trivial for $n = 1$ then this is witnessed either by a polymorphism $f_1,\dots,f_m\colon \{0,1\} \to \{0,1\}$ where $f_i(x) \in \{0, 1, x\}$ for all $i$, or by a polymorphism $f_1,\dots,f_m\colon \{0,1\} \to \{0,1\}$ where $f_i(x) \in \{x, \bar{x}\}$ for all $i$.
\end{restatable}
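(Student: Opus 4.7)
The approach is to analyze a 2-ary polymorphism $f=(f_1,\ldots,f_m)$ via its 1-ary row and column restrictions. For each $y\in P$, set $R(y):=(f_i(y_i,\cdot))_i$ and $L(y):=(f_i(\cdot,y_i))_i$; both are 1-ary polymorphisms of $P$, since for any $z\in P$ the 2-ary polymorphism applied to $(y,z)$ gives $(f_i(y_i,z_i))_i\in P$. By the hypothesis that $P$ is $\Phi$-trivial for $n=1$, each restriction is either a $\Phi$-dictator (a tuple of permutations lying in $\Phi$) or a certificate of $P$ (some components are constant, and the constants form a certificate). Suppose $f$ is neither dictatorial nor a certificate; we aim to derive one of cases 1--3.

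If every $R(y)$ and every $L(y)$ is a $\Phi$-dictator, then by non-degeneracy every row and every column of each $f_i$ is a permutation, so $f_i$ is a Latin square; the dictator condition on restrictions is precisely the conformance of $f$ to $\Phi$, so case~3 holds. Otherwise, WLOG some $R(y^*)$ is a certificate with domain $S^*$ and values $\rho^*$, i.e., $f_i(y_i^*,\cdot)\equiv\rho_i^*$ for $i\in S^*$. If each such $f_i$ were globally constant, then $f$ itself would be a certificate with the same data, a contradiction; hence some $i_0\in S^*$ has $f_{i_0}$ not globally constant, so some $\sigma\ne y_{i_0}^*$ satisfies $f_{i_0}(\sigma,\cdot)\not\equiv\rho_{i_0}^*$, and by non-degeneracy $\sigma$ extends to a $y^{**}\in P$.

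A case analysis on the types of $R(y^{**})$, $L(y^*)$, and $L(y^{**})$ now yields case~1 or case~2. If $f_{i_0}(\sigma,\cdot)$ is a permutation, then confronting the constant row at $y_{i_0}^*$ with this permutation row via the polymorphism applied to pairs $(y^*,z)$---with $z\in P$ chosen so that $f_j(y_j^*,z_j)=z_j$ for $j\ne i_0$---forces $P$ to be closed under setting coordinate $i_0$ to $\rho_{i_0}^*$, which is case~1. If instead some row of $f_{i_0}$ is a constant different from $\rho_{i_0}^*$, then on a binary coordinate this pins $f_{i_0}$ down to AND or OR, while on a larger alphabet the same analysis must collapse into case~1 by the certificate structure of the restriction; the surviving outcome is the AND/OR pattern of case~2. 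The \emph{furthermore} clause uses that on $\{0,1\}$ the only maps are $0$, $1$, $\id$, and $\neg$: given any non-trivial 1-ary polymorphism that mixes $\neg$ with constants, composing with a suitable element of $\Phi$ or iterating the polymorphism eliminates either all negations (yielding $\{0,1,\id\}$) or all constants (yielding $\{\id,\neg\}$). The main obstacle is this final case analysis: cleanly separating the closure and AND/OR outcomes requires carefully chosen witnessing tuples $z\in P$ and a fine tracking of which rows and columns of each $f_i$ are permutations versus constants---the Latin-square case and the furthermore clause are comparatively mechanical once these tools are in place.
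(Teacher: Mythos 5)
Your high-level framework---examining the $1$-ary restrictions $R(y)$ and $L(y)$, and observing that if all of them are $\Phi$-dictators then $f$ is a Latin square conforming to $\Phi$---matches the paper. But the two cases that are supposed to produce conclusions~1 and~2 of the theorem are not correctly argued, and the gap is substantial.

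The concrete problem in your derivation of case~1: you apply the polymorphism to a pair $(y^*,z)$ and want to choose $z\in P$ so that $f_j(y^*_j,z_j)=z_j$ for $j\ne i_0$. There is no reason such a $z$ exists---$f_j(y^*_j,\cdot)$ is an arbitrary function for $j\notin S^*$. Even setting that aside, the argument cannot work: you already know that $\rho^*$ is a certificate on $S^*$, and since the output $(f_j(y^*_j,z_j))_j$ agrees with $\rho^*$ on $S^*$ (including at $i_0$), its membership in $P$ is \emph{automatic} and carries no new information about $P$. And for $z$ failing your constraint, the coordinates outside $S^*\cup\{i_0\}$ are not preserved, so you cannot conclude that an arbitrary vector with $i_0$ reset to $\rho^*_{i_0}$ lands in $P$. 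Your treatment of the AND/OR outcome (``the surviving outcome is the AND/OR pattern'') is entirely unargued.

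The missing device is the paper's \emph{completion} technique. Define $h_i(\sigma)=\tau$ when the row $f_i(\sigma,\cdot)$ is the constant $\tau$, and $h_i(\sigma)=*$ otherwise, so that each row restriction $R(y)$ falls into the dictatorial or certificate case according to Lemma~\ref{lem:reduction-h}. The proof then splits on whether the certificate case holds for \emph{every} $y\in P$, on whether some $y$ falls in the dictatorial case while some $h_{i_0}(\sigma_0)\neq*$, or on whether $h\equiv*$. In the first two, one \emph{completes} the partial data $h_i$ to genuine $1$-ary polymorphisms $g_i$ (\Cref{lem:reduction-g1,lem:reduction-g2})---either by filling $*$'s with the identity, or by filling them with $z_i$ for a chosen $z\in P$---and then applies $\Phi$-triviality for $n=1$ to $g$. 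It is the dichotomy (dictatorial vs.\ certificate) applied to these completions, combined with the permutation hypothesis on $\Phi$, that yields $y|_{i_0\gets\tau}\in P$ (case~1) or the XOR-like closure property enabling an explicit AND/OR polymorphism (case~2). Without the completions, there is no $1$-ary polymorphism to apply the hypothesis to, and the argument does not go through.

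For the ``furthermore'' clause, iterating $f$ (taking $g_i=f_i\circ f_i\in\{0,1,\id\}$) is the correct move and essentially matches the paper; ``composing with a suitable element of $\Phi$'' is not a step that appears or is needed, and it is unclear what it would accomplish.
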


Let us demystify Latin square polymorphisms.
First, if $\Phi = \{(\id,\dots,\id)\}$ then there are no Latin square polymorphisms conforming to $\Phi$. Second, when $\Sigma_i = \{0, 1\}$, the only Latin square polymorphisms are $f_i(x) \in \{ x_1 \oplus x_2, x_1 \oplus x_2 \oplus 1 \}$. A predicate $P \subseteq \{0,1\}^m$ has such a polymorphism iff it is an affine subspace.

We prove \Cref{thm:reduction} in \Cref{sec:reduction}.

\medskip

As an application of the previous results, we study \emph{symmetric} predicates over $\{0,1\}$. These are predicates $P \subseteq \{0,1\}^m$ such that whether $y$ belongs to $P$ depends only on the Hamming weight of $y$. We consider two notions of triviality, corresponding to the following two collections:
\begin{itemize}
    \item $\Phi_{\id} = \{ (\id,\dots,\id) \}$.
    \item $\Phi_{\neg} = \{ \id, \neg \}^m$.
\end{itemize}

\begin{restatable}{theorem}{thmsymmetric} \label{thm:symmetric}
A non-degenerate symmetric predicate $P \subseteq \{0,1\}^m$ is $\Phi_{\neg}$-trivial if and only if $P$ is not one of the following predicates:
\begin{itemize}
    \item All vectors of even parity.
    \item All vectors of odd parity.
    \item All vectors of weight at least $w$, for some $w \in \{1,\dots,m-1\}$.
    \item All vectors of weight at least $w$ together with the all-zero vector, for some $w \in \{2,\dots,m\}$.
    \item All vectors of weight at most $w$, for some $w \in \{1,\dots,m-1\}$.
    \item All vectors of weight at most $w$ together with the all-one vector, for some $w \in \{0,\dots,m-2\}$.
\end{itemize}
Furthermore, if $P$ is $\Phi_{\neg}$-trivial then all dictatorial polymorphisms $f_1,\dots,f_m$ of $P$ are such that $f_1(x) = \cdots = f_m(x) = x_j$ or (possibly) $f_1(x) = \cdots = f_m(x) = \bar{x}_j$, for some $j \in [n]$.

\smallskip

The predicate $P$ is $\Phi_{\id}$-trivial if and only if $P$ is not one of the predicates listed above, and also $P$ is not closed under complementation (i.e., flipping all bits).
\end{restatable}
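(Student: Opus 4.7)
My plan is to apply \Cref{thm:main} and \Cref{thm:reduction} to reduce $\Phi_\neg$-triviality to checking $n = 1$ together with the three exceptional cases at $n = 2$. I handle the bad list by exhibiting explicit non-trivial polymorphisms: even and odd parity admit the XOR operation $f_i(x_1, x_2) = x_1 \oplus x_2$, a Latin square whose slices $f_i(y_i, \cdot)$ always lie in $\{\id, \neg\}$ and hence conform to $\Phi_\neg$; ``weight $\geq w$'' (with or without the all-zero vector) admits the all-OR polymorphism as a non-dictatorial AND/OR polymorphism; and ``weight $\leq w$'' (with or without the all-one vector) admits the dual all-AND polymorphism.

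For a non-degenerate symmetric $P$ with weight set $W$ not on the bad list, I would rule out each exception of \Cref{thm:reduction}. For \emph{coordinate closure}, if $P$ is closed under setting coordinate $i$ to $\sigma$ then by symmetry $P$ is closed under setting any coordinate to $\sigma$, which forces $W$ to be downward closed (for $\sigma = 0$) or upward closed (for $\sigma = 1$), both of which are on the bad list. For \emph{Latin squares conforming to $\Phi_\neg$}, the binary Latin squares on $\{0, 1\}$ are $(x, y) \mapsto x \oplus y$ and $(x, y) \mapsto x \oplus y \oplus 1$, both of which conform to $\Phi_\neg$ automatically because their slices lie in $\{\id, \neg\}$; hence existence of such a polymorphism means $P$ is a coset of a linear subspace of $\mathbb{F}_2^m$, and the only $S_m$-invariant possibilities are $\{0\}$, $\{\mathbf{1}\}$, $\{0, \mathbf{1}\}$, even parity, odd parity, and $\mathbb{F}_2^m$ --- each either degenerate or on the bad list.

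The main work is in the remaining two conditions. For an AND/OR polymorphism with AND-set $T \subseteq [m]$ of size $t$, the case $t = m$ (all-AND) gives closure of $W$ under intersection: combined with symmetry, considering pairs of weight-$a$ vectors with controlled overlap forces $W \setminus \{m\}$ to be a downward-closed interval $\{0, \ldots, w\}$, putting $P$ on the bad list. The case $t = 0$ (all-OR) is dual. For mixed $t$, the symmetry of $P$ yields a polymorphism for every $T'$ with $|T'| = t$, and choosing inputs such as $y = \mathbf{0}$ (if $0 \in W$), $y = \mathbf{1}$ (if $m \in W$), or pairs of vectors with prescribed overlap on $T$, again forces $W$ into a bad form. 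For $n = 1$ triviality, the ``furthermore'' clause of \Cref{thm:reduction} restricts attention to polymorphisms with $f_i \in \{0, 1, \id\}$. Writing $s = |\{i : f_i = 1\}|$ and $k = |\{i : f_i = \id\}|$, such a polymorphism is of certificate type iff $\{s, s+1, \ldots, s+k\} \subseteq W$; non-triviality requires a gap of size at least $m - k + 1$ in $W$ covering some $w \in [0, k]$ with $s + w \notin W$, subject to the polymorphism constraint that $s + u \in W$ for every achievable overlap $u$. A case analysis on the location of the gap then forces $W$ into a bad form.

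For the ``furthermore'' conclusion on dictators, a dictatorial polymorphism $f_i(x) = \phi_i(x_j)$ induces a unary endomorphism $(\phi_1, \ldots, \phi_m)$ of $P$, and for symmetric $P$ with $(\phi_i) \in \{\id, \neg\}^m$ this is equivalent to $P$ being closed under XOR with $\chi_U$, where $U = \{i : \phi_i = \neg\}$. Symmetry of $P$ implies closure under XOR with $\chi_{U'}$ for every $U'$ of size $|U|$, and the same analysis as above shows that for non-bad $P$ only $|U| = 0$ or $|U| = m$ is possible; thus the dictator is either $f_i(x) = x_j$ throughout or $f_i(x) = \bar{x}_j$ throughout (the latter only when $P$ is closed under complementation). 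For $\Phi_\id$-triviality the same reduction applies, but the ``second form'' polymorphism (all-$\bar{x}_j$), which exists exactly when $P$ is closed under complementation, is dictatorial under $\Phi_\neg$ but non-trivial under $\Phi_\id$; hence $\Phi_\id$-triviality additionally requires $P$ not closed under complementation. The main obstacle will be the weight-interval bookkeeping in the AND/OR and $n = 1$ analyses, but the constraints are tight enough to verify each non-bad $W$ directly.
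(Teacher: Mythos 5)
Your high-level strategy matches the paper's exactly: exhibit explicit witnessing polymorphisms for necessity, and for sufficiency invoke \Cref{thm:main,thm:reduction} and analyze the resulting cases (the $n=1$ witnesses with $f_i \in \{0,1,\id\}$ and $f_i\in\{\id,\neg\}$, coordinate-closure, AND/OR, Latin square). The places where you diverge are details, and two of them are left as sketches that you rightly flag as ``the main obstacle.''

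First, for the $n=1$ case with $f_i\in\{0,1,\id\}$, the paper does not run a gap-location case analysis the way you propose. Instead it abstracts the operation ``set the constant coordinates to their constants'' as $\mathcal{O}(y)$, with $a_0$ zeros and $a_1$ ones, and proves a clean structural lemma (\Cref{lem:symmetric-const}): by a single explicit swap of a $0$ and a $1$ inside a weight-$w$ vector and then applying $\mathcal{O}$, one shows $w-1\in W$ whenever $a_0,b_1>0$ and $w+1\in W$ whenever $a_1,b_0>0$. This immediately shows that $W\cap\{a_1,\dots,m-a_0\}$ is all-or-nothing, giving a contradiction when $a_0,a_1>0$ and the bad forms when $a_0=0$ or $a_1=0$. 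Your formulation in terms of a gap $[u^*, u^*+m-k]$ disjoint from $W$ is correct as a necessary condition for the witness to exist, but you still need the polymorphism constraint (for \emph{every} $a\in W$ and every achievable overlap $u$, $s+u\in W$) to pin down $W$, and that verification is exactly where the work lies; the paper's swap lemma does it in one stroke. Second, for the AND/OR case, your proposal to partition by the AND-set $T$ and control overlaps is more involved than necessary: the paper observes that once two coordinates share the same type (both AND, or both OR), swapping those two entries in a weight-$w$ vector and applying $f$ drops (or raises) the weight by one, which immediately forces $W$ into the bad form; only $m\le 2$ needs separate checking. You should also note that for mixed $t$ with $m\ge 3$ the pigeonhole gives two coordinates of the same type, so no separate ``mixed $t$'' analysis is needed.

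Your Latin-square argument via classifying $S_m$-invariant cosets is a genuine alternative to the paper's (which just extracts a nonzero XOR-closure vector $z_0\oplus v$ and reuses the parity argument), and it works, though you should justify explicitly that the underlying subspace $Q = \{p_1\oplus p_2 : p_1,p_2\in P\}$ inherits $S_m$-invariance from $P$. Your necessity direction, coordinate-closure case, XOR-closure case, and the ``furthermore'' argument about dictators all agree with the paper. Overall: same route, with the weight bookkeeping in two cases left as a plan rather than a proof; the paper's swap trick (\Cref{lem:symmetric-const}) is the missing device that would close those gaps cleanly.
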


We prove \Cref{thm:symmetric} in \Cref{sec:symmetric}. 
\Cref{thm:arrow-mossel}, which states that the symmetric predicate $P_{\tNAE}$ is $\Phi_{\neg}$-trivial, immediately follows.

Using \Cref{thm:symmetric} as a starting point, we are able to determine, for any given symmetric predicate over $\{0,1\}$, all of its polymorphisms.

\begin{restatable}{theorem}{thmsymmetricclassification}
\label{thm:symmetric-classification}
Let $P \subseteq \{0, 1\}^m$ be a non-degenerate symmetric predicate.
\begin{enumerate}
\item \label{itm:sc-odd}
If $P = \{(0,1),(1,0)\}$ then $f_1,f_2$ is a polymorphism of $P$ iff $f_2(x) = \overline{f_1(\bar{x})}$ for all $x$.

\item \label{itm:sc-equal}
If $P = \{(0,\dots,0),(1,\dots,1)\}$ then $f_1,\dots,f_m$ is a polymorphism of $P$ iff $f_1 = \cdots = f_m$.

\item \label{itm:sc-parity}
If $m \ge 3$ and $P$ consists of all vectors of parity $b$ then $f_1,\dots,f_m\colon \{0,1\}^n \to \{0,1\}$ is a polymorphism of $P$ iff there exist a subset $J \subseteq [n]$ and bits $b_1,\dots,b_m \in \{0,1\}$ such that $f_i(x) = b_i \oplus \bigoplus_{j \in J} x_j$ for all $i$, where $b_1 \oplus \cdots \oplus b_m = (|J| + 1)b$ (here $(|J| + 1)b = 0$ if either $b = 0$ or $|J|$ is odd).

\item \label{itm:sc-atmost}
If $m \geq 3$ and $P$ consists of all vectors of weight at most $w$, where $1 \leq w \leq m-1$, then $f_1,\dots,f_m$ is a polymorphism of $P$ iff the corresponding families $F_1,\dots,F_m \subseteq 2^{[n]}$ are $(w+1)$-wise intersecting: if we choose $w+1$ of the families and one set from each family, then the intersection of the sets is non-empty.

If $m \geq 3$ and $P$ consists of all vectors of weight at least $m-w$, where $1 \leq w \leq m-1$, then an analogous condition to the preceding case holds, with $0$s and $1$s switched.

\item \label{itm:sc-atmost-m}
If $P$ consists of all vectors of weight at most $w$ together with $(1,\dots,1)$, where $1 \leq w \leq m-2$, then $f_1,\dots,f_m$ is a polymorphism of $P$ iff either $f_1 = \cdots = f_m$ and the common value is an AND of a (possibly empty) subset of coordinates, or at least $m-w$ of the functions are constant~$0$.

If $P$ consists of all vectors of weight at least $m-w$ together with $(0,\dots,0)$, where $1 \leq w \leq m-2$, then $f_1,\dots,f_m$ is a polymorphism of $P$ iff either $f_1 = \cdots = f_m$ and the common values is an OR of a (possibly empty) subset of coordinates, or at least $m-w$ of the functions are constant~$1$.

\item \label{itm:sc-trivial}
Suppose $P$ doesn't conform to any of these cases.

If $P$ is closed under complementation then $f_1,\dots,f_m$ is a polymorphism of $P$ iff either $f_1 = \cdots = f_m \in \{x_j, \bar{x}_j\}$ for some $j$, or $f_1,\dots,f_m$ are of certificate type.

If $P$ is not closed under complementation then $f_1,\dots,f_m$ is a polymorphism of $P$ iff either $f_1 = \cdots = f_m = x_j$ for some $j$, or $f_1,\dots,f_m$ are of certificate type.
\end{enumerate}
\end{restatable}
We prove \Cref{thm:symmetric-classification} in \Cref{sec:symmetric-classification}. Fortunately, the proof of the theorem is not much longer than its statement.

\medskip

Finally, we relate the notion of $\Phi$-triviality to the notion of impossibility domains studied by Dokow and Holzman~\cite{DokowHolzman10a,DokowHolzman10c}.

A non-degenerate predicate $P \subseteq \{0,1\}^m$ is an \emph{impossibility domain} according to Dokow and Holzman~\cite{DokowHolzman10a} if the only Pareto efficient polymorphisms of $P$ are dictators. Spelled out in full, $P$ is an impossibility domain if whenever $f_1,\dots,f_m\colon \{0,1\}^n \to \{0,1\}$ is a polymorphism of $P$ satisfying $f_i(b,\dots,b) = b$ for all $i \in [m]$ and all $b \in \{0,1\}$, then there exists $j \in [n]$ such that $f_1(x) = \cdots = f_m(x) = x_j$.

Dokow and Holzman~\cite{DokowHolzman10c} extended this definition to arbitrary alphabets. In fact, they gave two different definitions using two different extensions of Pareto efficiency to arbitrary alphabets, \emph{supportiveness} and \emph{unanimity}, both of which we define below. They focused on supportiveness. Later, Szegedy and Xu~\cite{SzegedyXu15} studied both definitions (they used the term \emph{idempotence} instead of unanimity).

A function $f\colon \Sigma^n \to \Sigma$ is \emph{unanimous} if $f(\sigma,\dots,\sigma) = \sigma$ for all $\sigma \in \Sigma$. It is \emph{supportive} if $f(\sigma_1,\dots,\sigma_n) \in \{\sigma_1,\dots,\sigma_n\}$ for all $\sigma_1,\dots,\sigma_n \in \Sigma$. We can use these two notions to define two notions of impossibility domains: impossibility domains with respect to supportiveness, and impossibility domains with respect to unanimity.
We relate the latter notion to $\Phi$-triviality.

\begin{restatable}{theorem}{thmDH} \label{thm:DH}
Let $P \subseteq \Sigma_1 \times \cdots \times \Sigma_m$ be a non-degenerate predicate, where $|\Sigma_i| \geq 2$ for all $i$. Let $\Phi$ be such that for all $(\phi_1,\dots,\phi_m) \in \Phi$, the function $\phi_i$ is a permutation of $\Sigma_i$.

Suppose that $P$ is $\Phi$-trivial for $n = 1$. Then $P$ is $\Phi$-trivial if and only if it is an impossibility domain with respect to unanimity.
\end{restatable}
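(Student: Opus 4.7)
The plan is to prove both directions of the biconditional separately.

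\emph{Forward direction ($\Phi$-trivial implies impossibility domain w.r.t.\ unanimity).} I would take a unanimous polymorphism $(f_1,\dots,f_m)$ and apply $\Phi$-triviality, which places it in dictator type or certificate type. The certificate type is ruled out: non-degeneracy of $P$ forces the certificate subset $S$ to be non-empty (otherwise the empty-$S$ certificate condition forces $P = \Sigma_1 \times \cdots \times \Sigma_m$, contradicting non-degeneracy), so some $f_i$ would be the constant $\rho_i$; but unanimity forces $f_i(\sigma,\dots,\sigma) = \sigma$ for every $\sigma$, so $f_i$ attains every value in $\Sigma_i$ and is not constant (as $|\Sigma_i| \geq 2$). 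Hence $(f_i)$ is of dictator type with $f_i(x) = \phi_i(x_j)$ for some $\phi \in \Phi$; unanimity forces $\phi_i(\sigma) = \sigma$ for every $\sigma$, so $\phi_i = \id$ and $f_i(x) = x_j$ for every $i$, the standard dictator required by the impossibility-domain definition.

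\emph{Backward direction (impossibility domain w.r.t.\ unanimity implies $\Phi$-trivial).} By \Cref{thm:main}, it suffices to verify $\Phi$-triviality for $n = 2$; combined with the hypothesis of $\Phi$-triviality for $n = 1$, \Cref{thm:reduction} reduces this to ruling out three exceptional situations, each of which I would rule out by exhibiting a non-dictatorial unanimous polymorphism, contradicting the impossibility-domain assumption. In Case~1 (closure under setting coordinate $i$ to $\sigma$), define the 2-ary polymorphism $g_i(x_1,x_2) = x_1$ if $x_1 = x_2$ and $\sigma$ otherwise, with $g_j(x_1,x_2) = x_1$ for $j \neq i$; closure makes it a polymorphism, it is unanimous by inspection, and $g_i$ depends genuinely on $x_2$ when $x_1 \neq x_2$, so it is non-dictatorial. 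In Case~2, a non-dictatorial AND/OR polymorphism is already unanimous, because AND, OR, and projection to $x_1$ are all unanimous on $\{0,1\}$.

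\emph{Case~3 (Latin-square polymorphism $f$ conforming to $\Phi$).} I would consider the 1-ary polymorphism $\alpha_i(x) := f_i(x,x)$. By $\Phi$-triviality for $n=1$, either $\alpha$ is of dictator type (each $\alpha_i$ is a permutation and $\alpha \in \Phi$) or of certificate type. In the permutation subcase, $\alpha^{-1}$ is a 1-ary polymorphism (a suitable power of $\alpha$), and $g_i(x_1,x_2) := \alpha_i^{-1}(f_i(x_1,x_2))$ is a Latin-square polymorphism with $g_i(\sigma,\sigma) = \sigma$, hence unanimous and non-dictatorial (any Latin square on $|\Sigma_i| \geq 2$ elements fails to be a projection). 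The certificate subcase---where $\alpha_i = \rho_i$ is constant on a non-empty $S$ with $\rho|_S$ a certificate for $P$---is the main obstacle. My plan there is to pass to a higher-arity composition: the 3-ary polymorphism $f^{[3]}(x_1,x_2,x_3) := f(x_1, f(x_2, x_3))$ has diagonal $f^{[3]}_i(\sigma,\sigma,\sigma) = f_i(\sigma, \rho_i)$ on $i \in S$, which is the column $\rho_i$ of the Latin square $f_i$ and therefore a permutation; applying $\Phi$-triviality for $n=1$ to this new diagonal polymorphism and iterating the construction (each iteration either produces a dictator-type diagonal directly, or a certificate-type one whose constant-coordinate set is strictly contained in the previous non-permutation set) eventually yields a term operation whose diagonal is a permutation on all coordinates, at which point inverting as in the permutation subcase produces the required unanimous non-dictatorial polymorphism.
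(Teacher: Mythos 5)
Your forward direction and Cases~1 and~2 of the backward direction are correct. The forward direction matches the paper's argument (you argue directly, the paper by contrapositive; same content). In Case~1 you use a slightly different $2$-ary polymorphism than the paper's (the paper sets $f_{i_0}(x)=x_1$ when $x_2\neq\sigma_0$ and $f_{i_0}(x)=\sigma_0$ when $x_2=\sigma_0$; yours conditions on whether $x_1=x_2$), but both constructions work. Your handling of the permutation subcase of Case~3 (invert the diagonal and compose) is also sound and slightly more streamlined than the paper for that subcase.

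The gap is in the certificate subcase of Case~3. Write $f_i(x_1,x_2)=\pi_{i,x_1}(x_2)$ where each $\pi_{i,\tau}$ is a permutation, and let $\alpha^{(r)}_i$ be the diagonal of the $r$-fold left-nested iterate; one checks $\alpha^{(r)}_i(\sigma)=\pi_{i,\sigma}^{r}(\sigma)$. You claim that each iteration either yields a dictator-type diagonal or a certificate-type one whose constant-coordinate set $S^{(r+1)}$ is strictly contained in the previous non-permutation set $T^{(r)}$. What you actually establish is only $S^{(r+1)}\cap S^{(r)}=\emptyset$ (because $\alpha^{(r+1)}_i(\sigma)=f_i(\sigma,\rho_i)$ is a Latin-square column for $i\in S^{(r)}$). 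You do \emph{not} get $S^{(r+1)}\subseteq T^{(r)}$: a coordinate where $\alpha^{(r)}_i$ is a permutation can become constant at the next step. For instance with $\Sigma_i=\{0,1,2\}$ and $f_i(x,y)=x+y\bmod 3$, the diagonals are $\alpha^{(1)}_i(\sigma)=2\sigma$ (a permutation), $\alpha^{(2)}_i(\sigma)=3\sigma=0$ (constant), $\alpha^{(3)}_i(\sigma)=4\sigma=\sigma$, and so on; coordinates can re-enter the constant set, so the proposed potential function is not monotone and the termination argument as written does not go through. The paper sidesteps the iteration entirely: from $\alpha^{(r)}_i(\sigma)=\pi_{i,\sigma}^{r}(\sigma)$, taking $r=\prod_{i=1}^{m}|\Sigma_i|!$ forces $\pi_{i,\sigma}^{r}=\id$ for every $i$ and every $\sigma$, so the diagonal is the identity on all coordinates simultaneously, and $f^{\circ r}_1,\dots,f^{\circ r}_m$ is a unanimous non-dictatorial polymorphism with no appeal to $\Phi$-triviality for $n=1$ needed in this case.
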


We prove \Cref{thm:DH} in \Cref{sec:DH}.

\section*{Open questions}
\label{sec:open}

Post~\cite{Post20,Post42} classified all uni-sorted polymorphisms of predicates over $\{0, 1\}$. The case of larger alphabets is significantly more complicated~\cite{JanovMucnik59}, though some partial results were proven for $\{0, 1, 2\}$~\cite{Yablonski54,Zhuk15}. Classifying all multi-sorted polymorphisms appears daunting even for predicates over $\{0, 1\}$~\cite{BartoKapytka25}. Nevertheless, our results demonstrate that it is possible to determine which predicates support only simple polymorphisms.

Our notion of triviality is motivated by Mossel's extension of Arrow's theorem for three candidates, which shows that every polymorphism of $P_{\tNAE}$ is either of dictatorial type or of certificate type. When there are four or more candidates, a new kind of simple polymorphism arises. Abstracting away the details, define an $n$-ary polymorphism $f_1,\dots,f_m$ of $P \subseteq \Sigma_1 \times \cdots \times \Sigma_m$ to be of \emph{dictatorial-certificate type} if there exists a set $S \subseteq [m]$ such that:
\begin{itemize}
    \item For each $i \in S$, the function $f_i$ depends on at most one coordinate (not necessarily the same coordinate for all $i \in S$).

    \item Let $x^{(i)} \in \Sigma_i^n$ be such that $(x^{(1)}_j, \dots, x^{(m)}_j) \in P$ for all $j \in [n]$. Then $\rho := \bigl(f_i(x^{(i)})\bigr)|_{i \in S}$ is a certificate for $P$, meaning that if $y \in \Sigma_1 \times \cdots \times \Sigma_m$ agrees with $\rho$ on $S$ then $y \in P$.
\end{itemize}
It would be interesting to extend our results to this setting.

\smallskip

Another generalization is to the setting in which instead of a single predicate we have two predicates $P \subseteq \Sigma_1 \times \cdots \times \Sigma_m$ and $Q \subseteq \Delta_1 \times \cdots \times \Delta_m$. In this case functions $f_1,\dots,f_m$, where $f_i\colon \Sigma_i^n \to \Delta_i$, are a polymorphism if whenever $x^{(i)} \in \Sigma_i^n$ are such that $(x^{(1)}_j,\dots,x^{(m)}_j) \in P$ for all $j \in [n]$, then $(f_1(x^{(1)}), \dots, f_m(x^{(m)})) \in Q$.
This setting arose independently in universal algebra~\cite{Pippenger02,LPW18}, social choice theory~\cite{DokowHolzman10b}, and complexity theory~\cite{AGH17,BG16,BG21,BBKO21}.

The simplest setting is when $\Delta_i = \Sigma_i$ and $P \subseteq Q$, and the notion of simplicity depends on the context. Dokow and Holzman~\cite{DokowHolzman10b} describe a particular scenario arising from social choice theory, and determine which predicates over $\{0,1\}$ are impossibility domains in their setting. It would be interesting to extend their results to our setting, as well as to larger alphabets.

\smallskip

Other interesting open questions include removing the assumption on $\Phi$ from \Cref{thm:reduction,thm:DH}, extending \Cref{thm:DH} to supportiveness, and generalizing \Cref{thm:arrow} to arbitrary finite alphabets.

\section{Main result}
\label{sec:main}

In this section we prove our main result:

\thmmain*

Let $P \subseteq \Sigma_1 \times \cdots \times \Sigma_m$ be a non-degenerate predicate. If $P$ is $\Phi$-trivial then it is clearly $\Phi$-trivial for $n = 2$.

For the other direction, we first observe that $P$ is $\Phi$-trivial for $n=0$ and $n=1$. The case $n=0$ is trivial, since all polymorphisms are constant in this case, and so they conform to a certificate. In the case $n=1$, we can extend the given unary polymorphism $f_1,\dots,f_m$ to a binary polymorphism $F_1,\dots,F_m$ in which each function depends only on the first argument. Applying the case $n=2$, there are three cases:
\begin{itemize}
    \item There exists $(\phi_1,\dots,\phi_m) \in \Phi$ such that $F_i(x) = \phi_i(x_1)$ for all $i$.

    In this case $f_i(x) = \phi_i(x)$, as needed.

    \item There exists $(\phi_1,\dots,\phi_m) \in \Phi$ such that $F_i(x) = \phi_i(x_2)$ for all $i$.

    Since $F_i$ doesn't depend on $x_2$, in this case all $f_i$ are constant, and so they conform to a certificate.

    \item The functions $F_1,\dots,F_m$ conform to a certificate $\rho$.

    In this case $f_1,\dots,f_m$ also conform to $\rho$.
\end{itemize}

We prove that $P$ is $\Phi$-trivial for all $n > 2$ by induction. Assuming that $P$ is $\Phi$-trivial for $n = 2$ and for a given value of $n \ge 2$, we prove that it is also $\Phi$-trivial for $n + 1$.

From here on, we assume that we are given an $(n+1)$-ary polymorphism $f_1,\dots,f_m$. Our goal is to show that it is either of dictatorial type or of certificate type.

The idea of the proof is to consider the functions $f_i|_\sigma$ obtained by fixing the final argument to $\sigma \in \Sigma_i$. For each $y \in P$, the functions $f_1|_{y_1},\dots,f_m|_{y_m}$ are an $n$-ary polymorphism of $P$, and we can apply the inductive hypothesis to them. In order to complete the proof, we need to aggregate the structure of the $f_i|_\sigma$ to conclude a structure of the $f_i$. This will be accomplished by applying the $\Phi$-triviality of $2$-ary polymorphisms to a specially constructed function which abstracts the salient structure of the $f_i|_\sigma$ in a usable way, as described by the following lemma.

\begin{lemma} \label{lem:main-g}
There are functions $g_1,\dots,g_m$, where $g_i\colon \Sigma_i^2 \to \Sigma_i$, such that the following properties hold:
\begin{enumerate}
    \item $g_1,\dots,g_m$ is a polymorphism of $P$.
    \label{itm:main-g-poly}
    \item If $g_i(x) = \phi(x_1)$ for some $\phi\colon \Sigma_i \to \Sigma_i$ then $f_i(x) = \phi(x_{n+1})$.
    \label{itm:main-g-constant}
    \item If $g_i(x) = \phi(x_2)$ for some $\phi\colon \Sigma_i \to \Sigma_i$ then there are coordinates $s(i,\sigma) \in [n]$ such that $f_i|_\sigma(x) = \phi(x_{s(i,\sigma)})$.
    \label{itm:main-g-dictator}
\end{enumerate}
\end{lemma}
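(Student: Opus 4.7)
The plan is to transfer the $(n{+}1)$-ary structure of $f$ to a $2$-ary aggregate $g$ by taking a suitable restriction, then invoke the inductive hypothesis on $n$-ary restrictions of $f$ to verify each property. My candidate is the diagonal-plus-tail restriction
\[
  g_i(\alpha,\beta) \;:=\; f_i(\underbrace{\beta,\ldots,\beta}_{n},\alpha),
\]
placing $\alpha$ at the last coordinate and $\beta$ at the first $n$ coordinates of $f_i$.

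Property~1 is immediate: for $(\alpha_1,\ldots,\alpha_m),(\beta_1,\ldots,\beta_m)\in P$, the $m\times(n{+}1)$ matrix whose first $n$ columns equal $(\beta_1,\ldots,\beta_m)$ and whose last column equals $(\alpha_1,\ldots,\alpha_m)$ has all columns in $P$, so applying $f$ coordinate-wise places $(g_1(\alpha_1,\beta_1),\ldots,g_m(\alpha_m,\beta_m))$ in $P$. For properties~2 and~3 I would fix $i$ and apply the inductive hypothesis to the $n$-ary polymorphism $(f_1|_{y_1},\ldots,f_m|_{y_m})$ for a $y\in P$ with $y_i$ equal to the coordinate of interest (available by non-degeneracy). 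If $g_i(\alpha,\beta)=\phi(\alpha)$, then $f_i|_\alpha(\beta,\ldots,\beta)=\phi(\alpha)$ identifies $f_i|_\alpha$ as constant $\phi(\alpha)$ along the diagonal; the dictator branch of the inductive hypothesis then forces the component $\psi_i$ to be constant $\phi(\alpha)$ (hence $f_i|_\alpha\equiv\phi(\alpha)$), while the certificate branch with $i\in\dom\rho$ yields $f_i|_\alpha\equiv\rho_i=\phi(\alpha)$ directly. Ranging $\alpha$ over $\Sigma_i$ then produces $f_i(x)=\phi(x_{n+1})$. Property~3 is dual: $g_i(\alpha,\beta)=\phi(\beta)$ gives $f_i|_\alpha(\beta,\ldots,\beta)=\phi(\beta)$ on the diagonal, and the dictator branch of the inductive hypothesis pins down the coordinate $s(i,\alpha)\in[n]$ with $f_i|_\alpha=\phi\circ\pi_{s(i,\alpha)}$, while the certificate branch collapses $\phi$ to the common constant $\rho_i$ and allows any choice of $s(i,\alpha)$.

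The main obstacle I anticipate is the certificate sub-case in which $i\notin\dom\rho(y)$: the inductive hypothesis then imposes no constraint at all on $f_i|_{y_i}$, so the diagonal-only data captured by the candidate $g_i$ above does not suffice to force $f_i|_\alpha$ to be a constant (property~2) or a dictator (property~3). Concretely, if some $f_i$ behaves like $x_1\wedge x_2$ on a ``free'' coordinate while another $f_{i'}$ provides the covering certificate constant, then the above $g_i$ would spuriously look like a dictator on $\beta$. Overcoming this likely requires either refining $g_i$ to interrogate $f_i$ at additional off-diagonal inputs---for instance by composing with other $2$-ary polymorphisms of $P$, all of which are themselves dictators or certificates by the $n{=}2$ hypothesis---or an auxiliary argument showing that whenever the hypothesis of property~2 or~3 holds for $g_i$, one can always select a witness $y$ placing us in the dictator branch or in a certificate with $i\in\dom\rho(y)$. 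This careful handling of the free-coordinate sub-case is where I would expect the technical heart of the lemma's proof to lie.
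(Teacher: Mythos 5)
You have correctly identified the fatal gap in your construction, and unfortunately it is fatal. With $g_i(\alpha,\beta)=f_i(\beta,\ldots,\beta,\alpha)$, the hypothesis of Property~2 (say) gives you only that $f_i|_\alpha$ is constant \emph{on the diagonal}. To upgrade this to ``constant everywhere'' you must apply the inductive hypothesis to $(f_1|_{y_1},\ldots,f_m|_{y_m})$ for some $y\in P$ with $y_i=\alpha$, and in the certificate branch with $i\notin\dom\rho$ you learn nothing about $f_i|_\alpha$. Worse, if $f_i|_\alpha$ genuinely is a non-constant, non-dictator function that merely happens to be constant on the diagonal (e.g.\ $x_1\oplus x_2$ on a binary alphabet), then no choice of $y$ can rescue you: for every $y$ with $y_i=\alpha$, the polymorphism $(f_1|_{y_1},\ldots,f_m|_{y_m})$ is forced into the certificate branch with $i\notin\dom\rho$, because none of the other branches are compatible with $f_i|_\alpha$ failing to be a dictator or constant. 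So your $g_i$ can ``spuriously look like a dictator'' exactly as you fear, and this cannot be fixed by choosing a better witness $y$.

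The paper's construction resolves this by not using a restriction of $f$ at all. It first abstracts the relevant information into a function $h_i\colon\Sigma_i\to(\Sigma_i\to\Sigma_i)\cup\{\bot\}$ recording, for each $\sigma$, whether $f_i|_\sigma$ has the form $\phi(x_s)$ (and if so, which $\phi$), with $h_i(\sigma)=\bot$ otherwise. Then $g_i(\sigma,a)$ is defined to be $h_i(\sigma)(a)$ when $h_i(\sigma)\neq\bot$, and when $h_i(\sigma)=\bot$ the value $g_i(\sigma,\cdot)$ is set to a carefully chosen \emph{non-constant} filler function, distinct from the other rows, so that the antecedent of Properties~2 and~3 ($g_i$ depending only on $x_1$ or only on $x_2$) can only be triggered when $h_i$ is never $\bot$. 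This is precisely the ``refining $g_i$ to interrogate $f_i$ at additional inputs'' escape you gesture at, but concretely it is accomplished by designing $g$ from the classification of the restrictions rather than by sampling $f$ directly; the polymorphism property for this $g$ then follows from the inductive hypothesis applied rowwise, which is the one place where the $f_i|_{y_i}$ classification is invoked.
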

\begin{proof}
We first define for every $i$ a function $h_i$ which takes as input $\sigma \in \Sigma_i$ and returns either a function $\Sigma_i \to \Sigma_i$ or $\bot$. The definition is as follows: if $f_i|_\sigma(x) = \phi(x_s)$ for some (possibly constant) $\phi\colon \Sigma \to \Sigma$ then $h_i(\sigma) = \phi$, and otherwise $h_i(\sigma) = \bot$.

We can now define $g_i$, considering two cases:
\begin{itemize}
    \item There exists $\sigma_0 \in \Sigma_i$ such that $h_i(\sigma_0) \neq \bot$.

    Let $\psi\colon \Sigma_i \to \Sigma_i$ be any non-constant function which is different from $h_i(\sigma_0)$; such a function exists since $|\Sigma_i| \geq 2$, and so there are at least two non-constant functions on $\Sigma_i$.

    We define $g_i(\sigma,a) = h_i(\sigma)(a)$ if $h_i(\sigma) \neq \bot$, and $g_i(\sigma,a) = \psi(a)$ otherwise.

    \item $h_i(\sigma) = \bot$ for all $\sigma \in \Sigma_i$.

    Let $\psi',\psi''\colon \Sigma_i \to \Sigma_i$ be two different non-constant functions. Single out some $\sigma_0 \in \Sigma_i$. We let $g_i(\sigma_0,a) = \psi'(a)$ and $g_i(\sigma,a) = \psi''(a)$ for $\sigma \neq \sigma_0$.
\end{itemize}

Let us now verify the stated properties one by one:
\begin{enumerate}
    \item Suppose that $y,z \in P$. We need to show that $(g_1(y_1,z_1), \dots, g_m(y_m,z_m)) \in P$.

    Since $y \in P$, the functions $f_1|_{y_1},\dots,f_m|_{y_m}$ are a polymorphism of $P$. We now consider two cases, according to whether this polymorphism is of dictatorial type or of certificate type.

    Suppose first that $f_1|_{y_1},\dots,f_m|_{y_m}$ is of dictatorial type: there exist $j \in [n]$ and $(\phi_1,\dots,\phi_m) \in \Phi$ such that $f_i|_{y_i}(x) = \phi_i(x_j)$. Then $h_i(y_i) = \phi_i$, and so $g_i(y_i,a) = \phi_i(a)$.
    
    Define vectors $x^{(i)} \in \Sigma_i^{n+1}$ as follows: $x^{(i)}_1 = \cdots = x^{(i)}_n = z_i$ and $x^{(i)}_{n+1} = y_i$. By construction, $(x^{(1)}_j,\dots,x^{(m)}_j) \in P$ for all $j$, and so $(f_1(x^{(1)}),\dots,f_m(x^{(m)})) \in P$. Now $f_i(x^{(i)}) = f_i|_{y_i}(z_i,\dots,z_i) = \phi_i(z_i) = g_i(y_i,z_i)$, and so $(g_1(y_1,z_1), \dots, g_m(y_m,z_m)) \in P$.

    Suppose next that $f_1|_{y_1},\dots,f_m|_{y_m}$ conform to some certificate $\rho$. For every $i \in \dom\rho$, the function $f_i|_{y_i}$ is the constant $\rho_i$ function. Hence $h_i(y_i)$ is the constant $\rho_i$ function, and so $g_i(y_i,z_i) = \rho_i$. Since $\rho$ is a certificate, this shows that $(g_1(y_1,z_1), \dots, g_m(y_m,z_m)) \in P$.

    \item Suppose that $g_i(x) = \phi(x_1)$ for some $\phi\colon \Sigma_i \to \Sigma_i$. We need to show that $f_i(x) = \phi(x_{n+1})$.

    The definition of $g_i$ shows that $h_i(\sigma) \neq \bot$ for all $\sigma$ (since otherwise the function $a \mapsto g_i(\sigma,a)$ would not be constant), and so $h_i(\sigma)$ is the constant $\phi(\sigma)$ function. This implies that $f_i|_\sigma$ is the constant $\phi(\sigma)$ function, and so $f_i(x) = \phi(x_{n+1})$.

    \item Suppose that $g_i(x) = \phi(x_2)$ for some function $\phi\colon \Sigma_i \to \Sigma_i$. We need to show that for all $\sigma$ we have $f_i|_\sigma(x) = \phi(x_s)$ for some $s \in [n]$ which could depend on $\sigma$.

    If $h_i(\sigma) = \bot$ for some $\sigma \in \Sigma_i$ then the definition of $g_i$ guarantees that the functions $a \mapsto g_i(\sigma,a)$ are not all the same, and so it is not the case that $g_i(x)$ depends only on $x_2$. Hence $h_i(\sigma) \neq \bot$ for all $\sigma \in \Sigma_i$, implying that $g_i(x) = h_i(x_1)(x_2)$. Thus $h_i(\sigma) = \phi$ for all $\sigma$, and so for each $\sigma$ we have $f_i|_\sigma(x) = \phi(x_s)$ for some $s \in [n]$. \qedhere
\end{enumerate}
\end{proof}

Since $g_1,\dots,g_m$ is a polymorphism of $P$ and $P$ is $\Phi$-trivial for $n = 2$, we conclude that $g_1,\dots,g_m$ is either of dictatorial type or of certificate type. In order to complete the proof of \Cref{thm:main}, we consider three cases:
\begin{enumerate}
    \item All functions $g_1,\dots,g_m$ depend on $x_1$.
    \item All functions $g_1,\dots,g_m$ depend on $x_2$.
    \item All functions $g_1,\dots,g_m$ are of certificate type.
\end{enumerate}

\paragraph{There exists $(\phi_1,\dots,\phi_m) \in \Phi$ such that $g_i(x) = \phi_i(x_1)$ for all $i$.}

Applying \Cref{itm:main-g-constant} of \Cref{lem:main-g}, we see that $f_i|_\sigma(x) = \phi_i(\sigma)$ for all $i,\sigma$, and so $f_i(x) = \phi_i(x_{n+1})$ for all $i$. Therefore $f_1,\dots,f_m$ is of dictator type, where the dictator is $x_{n+1}$.

\paragraph{There exists $(\phi_1,\dots,\phi_m) \in \Phi$ such that $g_i(x) = \phi_i(x_2)$ for all $i$.}

Applying \Cref{itm:main-g-dictator} of \Cref{lem:main-g}, we conclude that there are coordinates $s(i,\sigma) \in [n]$ such that $f_i|_\sigma(x) = \phi_i(x_{s(i,\sigma)})$.

Let $A \subseteq [m]$ consist of those coordinates for which $\phi_i$ is not constant. If there exists $s \in [n]$ such that $s(i,\sigma) = s$ whenever $i \in A$ then we can set $s(i,\sigma) = s$ for $i \notin A$ to obtain that $f_1,\dots,f_m$ are of dictatorial type, where the dictator is $x_s$. So suppose that $\{ s(i,\sigma) : i \in A, \sigma \in \Sigma_i \}$ contains at least two different coordinates; in particular, $A$ is non-empty.

Recall that for every $y \in P$, the $n$-ary functions $f_1|_{y_1},\dots,f_m|_{y_m}$ are a polymorphism of $P$. Suppose first that there is some $y \in P$ such that $s(i',y_{i'}) \neq s(i'',y_{i''})$ for some $i',i'' \in A$.
This implies that $f_1|_{y_1},\dots,f_m|_{y_m}$ cannot be of dictatorial type.
Applying the induction hypothesis, we see that $f_1|_{y_1},\dots,f_m|_{y_m}$ must conform to some certificate $\rho$, where necessarily $\dom\rho \subseteq \bar{A}$. In this case, the functions $f_1,\dots,f_m$ also conform to $\rho$.

Suppose next that for every $y \in P$ there exists $s_y$ such that $s(i,y_i) = s_y$ for all $i \in A$. We would like to show that in this case as well $f_1,\dots,f_m$ are of certificate type.

Single out some arbitrary $y_0 \in P$ and let $s_0 = s_{y_0}$. For $i \in A$, we partition $\Sigma_i$ into two parts $\Sigma_{i,0},\Sigma_{i,1}$, where $\Sigma_{i,0}$ consists of those $\sigma \in \Sigma_i$ such that $s(i,\sigma) = s_0$. Thus for every $y \in P$, either $y_i \in \Sigma_{i,0}$ for all $i \in A$, or $y_i \in \Sigma_{i,1}$ for all $i \in A$. We will use this to construct a $2$-ary polymorphism $\chi_1,\dots,\chi_m$ which is not of dictatorial type, and deduce that $f_1,\dots,f_m$ are of certificate type.

The definition of $\chi_i$ is quite simple. If $i \notin A$, we define $\chi_i(x) = \phi_i$. If $i \in A$, we define $\chi_i(x) = \phi_i(x_1)$ if $x_1 \in \Sigma_{i,0}$ and $\chi_i(x) = \phi_i(x_2)$ if $x_1 \in \Sigma_{i,1}$.
Since $g_1,\dots,g_m$ are a polymorphism of $P$, so are $\phi_1,\dots,\phi_m$. This immediately implies that $\chi_1,\dots,\chi_m$ is a polymorphism.

Applying the case $n = 2$, we deduce that $\chi_1,\dots,\chi_m$ are either of dictatorial type or of certificate type. We claim that they cannot be of dictatorial type. It is clear that they cannot depend only on $x_1$. If they depended only on $x_2$, then considering $y_0$ as the first argument, we see that $\chi_1,\dots,\chi_m$ have to be constant, contradicting the non-emptiness of $A$.

Thus $\chi_1,\dots,\chi_m$ must conform to some certificate $\rho$. By construction, $\dom\rho \subseteq \bar{A}$ and $\rho_i = \phi_i$ for all $i \in \dom\rho$. Hence $f_1,\dots,f_m$ also conform to $\rho$.

\paragraph{The functions $g_1,\dots,g_m$ conform to some certificate $\rho$.}

Applying \Cref{itm:main-g-constant} of \Cref{lem:main-g}, we see that $f_i|_\sigma$ is the constant $\rho_i$ function whenever $i \in \dom\rho$, and so $f_i$ is the constant $\rho_i$ function whenever $i \in \dom\rho$. Hence $f_1,\dots,f_m$ also conform to $\rho$.

\section{Reduction to \texorpdfstring{$n=1$}{n=1}}
\label{sec:reduction}

In this section we prove the reduction from $n=2$ to $n=1$:

\thmreduction*

Suppose that $P$ is $\Phi$-trivial for $n = 1$, and let $f_1,\dots,f_m$ be a $2$-ary polymorphism of $P$. We will attempt to show that $f_1,\dots,f_m$ are of dictatorial type or of certificate type. The proof will fail in certain cases, and each of these cases will be covered by one of the cases in the statement of the theorem. Later on, we will prove the ``furthermore'' part.

As in the proof of \Cref{thm:main}, we define $f_i|_\sigma$ to be the function obtained by fixing the final argument to $\sigma \in \Sigma_i$. Thus for every $y \in P$, the functions $f_1|_{y_1},\dots,f_m|_{y_m}$ are a $1$-ary polymorphism of $P$.

As in the proof of \Cref{thm:main}, we capitalize on this observation by considering auxiliary functions $h_i\colon \Sigma_i \to \Sigma_i \cup \{*\}$ (where $*$ is a symbol not in $\Sigma_i$) which abstract the situation: if $f_i|_\sigma$ is the constant $\tau$ function then we define $h_i(\sigma) = \tau$, and otherwise we define $h_i(\sigma) = *$. The idea is that in certain cases we can fill in the stars to obtain a $1$-ary polymorphism of $P$, gaining insight on $f_1,\dots,f_m$ by applying $\Phi$-triviality for $n = 1$.

We start with a simple observation which follows immediately from the observation that $f_1|_{y_1},\dots,f_m|_{y_m}$ is a polymorphism for every $y \in P$, coupled with $\Phi$-triviality for $n = 1$.

\begin{lemma} \label{lem:reduction-h}
For every $y \in P$, one of the following cases holds:
\begin{enumerate}
    \item Dictatorial case: $h_i(y_i) = *$ for all $i$.

    In this case there exists $(\phi_1,\dots,\phi_m) \in \Phi$ such that $f_i|_{y_i} = \phi_i$ for all $i$.

    \item Certificate case: There is a certificate $\rho$ such that $h_i(y_i) = \rho_i$ for all $i \in \dom\rho$.
\end{enumerate}
\end{lemma}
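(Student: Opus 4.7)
The plan is to apply the hypothesis of $\Phi$-triviality for $n=1$ directly. Fix $y \in P$. First I would verify that $(f_1|_{y_1},\dots,f_m|_{y_m})$ is a $1$-ary polymorphism of $P$: for any $(\tau_1,\dots,\tau_m) \in P$, the two ``columns'' $(y_i,\tau_i)$ for $i \in [m]$ both witness membership in $P$ (namely $y \in P$ and $(\tau_1,\dots,\tau_m) \in P$), so applying the $2$-ary polymorphism $f_1,\dots,f_m$ coordinatewise produces the point whose $i$'th entry is $f_i(y_i,\tau_i) = f_i|_{y_i}(\tau_i)$, which therefore lies in $P$.

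By $\Phi$-triviality for $n=1$, this polymorphism is either of dictatorial type or of certificate type, giving the two cases of the lemma. In the dictatorial case, there exists $(\phi_1,\dots,\phi_m) \in \Phi$ with $f_i|_{y_i} = \phi_i$ for every $i$. Here the extra hypothesis that each $\phi_i$ is a permutation of $\Sigma_i$ is crucial: combined with $|\Sigma_i| \geq 2$, it forces every $\phi_i$ to be non-constant, hence $f_i|_{y_i}$ is non-constant, so $h_i(y_i) = *$ for all $i$. This matches case~1. In the certificate case, there is a certificate $\rho$ of $P$ such that $f_i|_{y_i}$ is the constant $\rho_i$ function for every $i \in \dom\rho$; by the definition of $h_i$, this yields $h_i(y_i) = \rho_i$ on $\dom\rho$, matching case~2.

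There is no real obstacle: the argument is a direct unpacking of the triviality hypothesis applied to the restriction along $y$. The only subtlety worth flagging is the use of the permutation assumption on $\Phi$, which is what prevents the dictatorial outcome from secretly producing a constant $f_i|_{y_i}$ and hence ensures that case~1 truly places a star at \emph{every} coordinate rather than at only some.
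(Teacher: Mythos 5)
Your proof is correct and matches the paper's (implicit) argument: the paper asserts that the lemma ``follows immediately'' from the fact that $f_1|_{y_1},\dots,f_m|_{y_m}$ is a $1$-ary polymorphism together with $\Phi$-triviality for $n=1$, and you have filled in exactly those details, correctly identifying that the permutation hypothesis on $\Phi$ (together with $|\Sigma_i|\ge 2$) is what forces every $h_i(y_i)$ to equal $*$ in the dictatorial case. One cosmetic slip: the paper defines $f_i|_\sigma$ by fixing the \emph{last} argument, so $f_i|_{y_i}(\tau_i) = f_i(\tau_i,y_i)$ rather than $f_i(y_i,\tau_i)$ as you wrote, but since both $y$ and $\tau$ lie in $P$ the two columns are interchangeable and the conclusion is unaffected.
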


We now consider several cases:

\begin{enumerate}
    \item The certificate case holds for all $y \in P$.
    \item There is $\eta \in P$ for which the dictatorial case holds, and furthermore, $h_{i_0}(\sigma_0) \neq *$ for some $i_0,\sigma_0$.
    \item $h_i(\sigma) = *$ for all $i,\sigma$.
\end{enumerate}

Each case will involve a different argument. The first two cases will involve various ways of \emph{completing} each $h_i$ to a function $g_i\colon \Sigma_i \to \Sigma_i$ so that $g_1,\dots,g_m$ is a polymorphism of $P$. This means that $g_i(\sigma) = h_i(\sigma)$ whenever $h_i(\sigma) \neq *$.

\subsection*{The certificate case holds for all $y \in P$}

We start with the completion process.

\begin{lemma} \label{lem:reduction-g1}
We can complete $h_1,\dots,h_m$ to a polymorphism $g_1,\dots,g_m$ such that for each $i$, if $g_i$ is the constant $\tau$ function then $h_i$ is also the constant $\tau$ function.
\end{lemma}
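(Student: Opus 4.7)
The plan is to complete each $h_i$ coordinate-by-coordinate, separating the polymorphism condition from the non-constancy condition.

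For the polymorphism condition, observe that \emph{any} completion will work. Given $y \in P$, the certificate case of \Cref{lem:reduction-h} supplies a certificate $\rho_y$ with $h_i(y_i) = (\rho_y)_i$ for every $i \in \dom \rho_y$. Completing the $*$'s does not alter these already-defined values, so $(g_1(y_1), \dots, g_m(y_m))$ agrees with $\rho_y$ on $\dom \rho_y$; since $\rho_y$ is a certificate, the tuple lies in $P$. So the polymorphism property is effectively free, and all the effort will go into the non-constancy clause.

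To achieve the non-constancy clause, I will choose the completion of each $h_i$ carefully after a small case analysis on the set $h_i(\Sigma_i) \cap \Sigma_i$ (the $\Sigma_i$-valued portion of the image). If this set has size at least $2$, any completion yields a non-constant $g_i$. If $h_i$ is literally the constant $\tau$ function (no $*$'s at all), take $g_i := h_i$; both $h_i$ and $g_i$ are then constant $\tau$, as allowed. In the remaining two sub-cases, the $\Sigma_i$-image of $h_i$ is at most a single value $\tau$ but some values are $*$ (possibly all of them, when $h_i \equiv *$); here I use the hypothesis $|\Sigma_i| \geq 2$ to fill at least one $*$ with a value $\tau' \neq \tau$, making $g_i$ non-constant.

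The argument has essentially no obstacle: \Cref{lem:reduction-h} has already packaged the hard content, and the construction is just bookkeeping. The only place where the hypothesis $|\Sigma_i| \geq 2$ actually bites is in the final sub-case, where without it we could be forced to take $g_i$ constant even though $h_i$ has a $*$.
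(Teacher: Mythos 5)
Your proposal is correct and follows essentially the same route as the paper: you observe (as the paper does implicitly) that any completion satisfies the polymorphism condition, then arrange non-constancy by a short case analysis using $|\Sigma_i| \geq 2$. The only quibble is that when $h_i \equiv *$ your recipe ``fill at least one $*$ with $\tau' \neq \tau$'' has no $\tau$ to refer to and, if taken literally, could produce a constant $g_i$; you should instead fill in any non-constant function on $\Sigma_i$ (e.g.\ the identity, as the paper does), which is available since $|\Sigma_i| \geq 2$.
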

\begin{proof}
The definition of $g_i$ is quite simple. If $h_i(\sigma) = *$ for all $\sigma$ then we define $g_i(\sigma) = \sigma$ for all $\sigma$. Otherwise, suppose $h_i(\sigma) \neq *$. Since $|\Sigma_i| \geq 2$, we can find $\tau \in \Sigma_i$ which is different from $h_i(\sigma)$. We let $g_i(\sigma) = h_i(\sigma)$ if $h_i(\sigma) \neq *$, and $g_i(\sigma) = \tau$ otherwise.

The construction guarantees that $g_i$ can be constant only if $g_i = h_i$. It remains to check that $g_1,\dots,g_m$ is a polymorphism of $P$.
Let $y \in P$. Since the certificate case holds for all $y \in P$, there is a certificate $\rho$ such that $g_i(y_i) = h_i(y_i) = \rho_i$ for all $i \in \dom\rho$. Since $\rho$ is a certificate, this means that $(g_1(y_1),\dots,g_m(y_m)) \in P$.
\end{proof}

Since $g_1,\dots,g_m$ is a $1$-ary polymorphism of $P$ and $P$ is $\Phi$-trivial for $n = 1$, the functions $g_1,\dots,g_m$ are either of dictatorial type or of certificate type. We consider the two cases separately.

\paragraph{There exists $(\phi_1,\dots,\phi_m) \in \Phi$ such that $g_i = \phi_i$ for all $i$.}

We first observe that $\phi_1^{-1},\dots,\phi_m^{-1}$ is also a polymorphism of $P$. Indeed, $\phi_1^r,\dots,\phi_m^r$ is a polymorphism of $P$ for every $r \geq 0$, and if we take $r = \prod_{i=1}^m |\Sigma_i|! - 1$, then $\phi_i^r = \phi_i^{-1}$.

\smallskip

Suppose first that $h_{i_0}(\sigma_0) = *$ for some $i_0,\sigma_0$. Consider any $y \in P$ satisfying $y_{i_0} = \sigma_0$. By assumption, there is a certificate $\rho$ such that $\rho_i = h_i(y_i) = g_i(y_i) = \phi_i(y_i)$ for all $i \in \dom\rho$. Since $\phi_1^{-1},\dots,\phi_m^{-1}$ is a polymorphism of $P$, the assignment $\lambda$ defined by $\lambda_i = \phi_i^{-1}(\rho_i) = y_i$ for all $i \in \dom\rho$ is also a certificate. Observe that $i_0 \notin \dom\rho$. This means that $y|_{i_0 \gets \tau} \in P$ for all $\tau \in \Sigma_{i_0}$. Let us record this:
\begin{equation} \label{eq:reduction-1}
y \in P \text{ and } y_{i_0} = \sigma_0 \Longrightarrow y|_{i_0 \gets \tau} \in P \text{ for all } \tau.
\end{equation}

We now consider two cases: $|\Sigma_{i_0}| = 2$ and $|\Sigma_{i_0}| > 2$. If $|\Sigma_{i_0}| = 2$ then $P$ is closed under setting $i_0$ to $\bar{\sigma}_0$, which is one of the cases in the statement of the theorem. Indeed, if $y \in P$ satisfies $y_{i_0} = \bar{\sigma}_0$ then $y|_{i_0 \gets \bar{\sigma}_0} = y \in P$, and otherwise $y_{i_0} = \sigma_0$, and so \Cref{eq:reduction-1} shows that $y_{i_0 \gets \bar{\sigma}_0} \in P$.

The other case, $|\Sigma_{i_0}| > 2$, contradicts the assumption that $P$ is $\Phi$-trivial for $n = 1$. To see this, consider the functions $e_1,\dots,e_m$ defined as follows. If $i \neq i_0$ then $e_i = \id$. We let $e_{i_0}(\sigma) = \sigma$ for $\sigma \neq \sigma_0$, and $e_{i_0}(\sigma_0) = \sigma_1$ for some $\sigma_1 \neq \sigma_0$.
\Cref{eq:reduction-1} implies that $e_1,\dots,e_m$ are a polymorphism of $P$, and so are of either dictatorial type or certificate type.
However, by construction, no $e_i$ is constant, and so $e_1,\dots,e_m$ cannot be of certificate type; and $e_{i_0}$ is not a permutation, and so $e_1,\dots,e_m$ cannot be of dictatorial type.

\smallskip

Finally, suppose that $h_i(\sigma) \neq *$ for all $i,\sigma$. In this case $f_i|_\sigma$ is the constant $h_i(\sigma)$ function, and so $f_i(x) = h_i(x_2) = g_i(x_2) = \phi_i(x_2)$. Hence $f_1,\dots,f_m$ is of dictatorial type.

\paragraph{The functions $g_1,\dots,g_m$ conform to some certificate $\rho$.}

\Cref{lem:reduction-g1} implies that $h_i$ is the constant $\rho_i$ function for all $i \in \dom\rho$. The definition of $h_1,\dots,h_m$ implies that $f_i$ is the constant $\rho_i$ function for all $i \in \dom\rho$. Hence $f_1,\dots,f_m$ also conform to $\rho$.

\subsection*{There is $\eta \in P$ for which the dictatorial case holds, and furthermore, $h_{i_0}(\sigma_0) \neq *$ for some $i_0,\sigma_0$}

Rephrasing the first assumption, $h_i(\eta_i) = *$ for all $i$.
We start with the completion process.

\begin{lemma} \label{lem:reduction-g2}
Let $z \in P$. Consider the completion $g_1^z,\dots,g_m^z$ of $h_1,\dots,h_m$ defined as follows: $g_i^z(\sigma) = h_i^z(\sigma)$ if $h_i^z(\sigma) \neq *$, and $g_i^z(\sigma) = z_i$ if $h_i^z(\sigma) = *$.

The functions $g_1^z,\dots,g_m^z$ are a polymorphism of $P$.
\end{lemma}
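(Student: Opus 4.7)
The plan is a direct verification that $g_1^z, \ldots, g_m^z$ is a polymorphism: pick an arbitrary $y \in P$ and show $(g_1^z(y_1), \ldots, g_m^z(y_m)) \in P$. By \Cref{lem:reduction-h} applied to $y$, one of two cases holds, and I will handle them separately.

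If the dictatorial case of \Cref{lem:reduction-h} holds for $y$, then $h_i(y_i) = *$ for every $i \in [m]$. By the definition of the completion, $g_i^z(y_i) = z_i$ for every $i$, so the output tuple is exactly $z$, which lies in $P$ by hypothesis. This is precisely why $z$ was plugged in to fill the stars: the stars occur in a coordinated fashion (every $y$ producing all stars yields a single global pattern), and we only need a single point of $P$ to cover all such $y$ simultaneously.

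If the certificate case of \Cref{lem:reduction-h} holds for $y$, then there is a certificate $\rho$ of $P$ such that $h_i(y_i) = \rho_i$ for every $i \in \dom\rho$. Since $\rho_i \in \Sigma_i$ and is in particular not $*$, the completion rule gives $g_i^z(y_i) = h_i(y_i) = \rho_i$ for each $i \in \dom\rho$. Hence $(g_1^z(y_1), \ldots, g_m^z(y_m))$ agrees with $\rho$ on $\dom\rho$, and since $\rho$ is a certificate, the tuple lies in $P$.

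There is really no obstacle here; the lemma is essentially a bookkeeping consequence of the dichotomy provided by \Cref{lem:reduction-h} combined with the observation that $z$ serves as a uniform fallback for the dictatorial case while the certificate case takes care of itself. The interest of the lemma lies not in its proof but in the flexibility it affords: since $z \in P$ is arbitrary, one obtains a family of $1$-ary polymorphisms indexed by $P$, which will subsequently be combined (together with the dictator guaranteed at $\eta$) in the main argument.
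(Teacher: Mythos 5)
Your proof is correct and follows the same route as the paper: apply the dichotomy of \Cref{lem:reduction-h} to each $y \in P$, observe that the all-stars (dictatorial) case yields exactly $z \in P$ under the completion, and that the certificate case is preserved because $g_i^z$ agrees with $h_i$ wherever $h_i$ is not $*$. The paper phrases the case split as ``all stars / not all stars, then invoke the lemma,'' but this is the same argument.
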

\begin{proof}
Let $y \in P$. If $h_i(y_i) = *$ for all $i$ then $(g_1^z(y_1),\dots,g_m^z(y_m)) = z \in P$. Otherwise, invoking \Cref{lem:reduction-h}, there is a certificate $\rho$ such that $g_i^z(y_i) = h_i(y_i) = \rho_i$ for all $i \in \dom\rho$. Hence $(g_1^z(y_1),\dots,g_m^z(y_m)) \in P$ since $\rho$ is a certificate.
\end{proof}

Invoking the lemma, $g_1^\eta,\dots,g_m^\eta$ are a $1$-ary polymorphism of $P$, and so either of dictatorial type or of certificate type. We consider the two cases separately.

\paragraph{There exists $(\phi_1,\dots,\phi_m) \in \Phi$ such that $g_i^\eta = \phi_i$ for all $i$.}

Since each $\phi_i$ is a permutation, this can only happen if $h_i(\sigma) \neq *$ for all $\sigma \neq \eta_i$.

Consider any $\zeta \in P$ which is different from $\eta$. Invoking \Cref{lem:reduction-g2} again, the functions $g_1^\zeta,\dots,g_m^\zeta$ are a polymorphism of $P$, and so either of dictatorial type or of certificate type. Since $\zeta \neq \eta$, there must be a coordinate $i_1$ such that $\zeta_{i_1} \neq \eta_{i_1}$. Since $g_{i_1}^\eta$ is a permutation and $g_{i_1}^\zeta$ differs from $g_{i_1}^\eta$ only on input $\eta_{i_1}$, we see that $g_{i_1}^\zeta$ cannot be a permutation. Hence $g_1^\zeta,\dots,g_m^\zeta$ must conform to some certificate $\rho$.

If $i \in \dom\rho$ then $g_i^\zeta$ must be constant. Since $g_i^\eta$ is a permutation and $g_i^\zeta$ differs from it only on input $\eta_i$, necessarily $|\Sigma_i| = 2$ and $\zeta_i = \bar{\eta}_i$.
Letting $B = \{ i : |\Sigma_i| = 2 \}$, this shows that: \begin{equation} \label{eq:reduction-2}
\zeta \in P \text{ and } \zeta \neq \eta \Longrightarrow w \in P \text{ whenever } w_i = \zeta_i \text{ for all } i \in B \text{ such that } \zeta_i = \bar{\eta}_i.
\end{equation}

This property allows us to construct a polymorphism $e_1,\dots,e_m$ of AND/OR type, which is one of the cases in the statement of the theorem. If $i \notin B$, we let $e_i(x) = x_1$. If $i \in B$ then $e_i(x) = \eta_i$ if $x_1 = x_2 = \eta_i$, and $e_i(x) = \bar{\eta}_i$ otherwise. When $\Sigma_i = \{0, 1\}$, the function $e_i$ is the OR function if $\eta_i = 0$ and the AND function if $\eta_i = 1$.

To see that this is a polymorphism, let $y,z \in P$. If $y = z = \eta$ then $(e_1(y_1,z_1),\dots,e_m(y_m,z_m)) = \eta \in P$. Otherwise, suppose without loss of generality that $z \neq \eta$. Applying \Cref{eq:reduction-2} with $\zeta = z$, it suffices to show that $e_i(y_i,z_i) = z_i$ for all $i \in B$ such that $z_i = \bar{\eta}_i$. This follows directly from the definition of $e_i$.

To see that $e$ is non-dictatorial, take any $\zeta \in P$ other than $\eta$. The argument about shows that $g_1^\zeta,\dots,g_m^\zeta$ must conform to some certificate $\rho$, and furthermore, if $i \in \dom\rho$ then $i \in B$, and so $e_i$ is not a dictator.

\paragraph{The functions $g_1^\eta,\dots,g_m^\eta$ conform to some certificate $\rho$.}

Suppose first that $\tau := h_{i_1}(\sigma_1) \notin \{ \eta_{i_1}, * \}$ for some $i_1,\sigma_1$. This means that $g_{i_1}^\eta$ is not constant, and so $i_1 \notin \dom\rho$. Thus $\zeta := \eta|_{i_1 \gets \sigma_1} \in P$. Observe that $h_{i_1}(\zeta_{i_1}) = \tau \neq *$ whereas $h_i(\zeta_i) = h_i(\eta_i) = *$ for all $i \neq i_1$. In view of \Cref{lem:reduction-h}, this shows that $P$ contains all $z$ such that $z_{i_1} = \tau$. In particular, $P$ is closed under setting $i_1$ to $\tau$, which is one of the cases in the statement of the theorem.

\medskip

We can thus assume that $h_i(\sigma) \in \{\eta_i,*\}$ for all $i,\sigma$. Let $C$ consist of $i$ such that $h_i(\sigma) = *$ for all $\sigma$. By assumption, $i_0 \notin C$.

\smallskip

Suppose first that $C \neq \emptyset$ or $|\Sigma_i| > 2$ for some $i$. Consider any $\zeta \in P$. Invoking \Cref{lem:reduction-g2}, the functions $g_1^\zeta,\dots,g_m^\zeta$ are a polymorphism, and so either of dictatorial type or of certificate type. We claim that one of these functions is not a permutation, and so $g_1^\zeta,\dots,g_m^\zeta$ must be of certificate type.

Indeed, if $C \neq \emptyset$, say $i \in C$, then $g_i^\zeta$ is constant. Similarly, if $|\Sigma_i| > 2$ then $g_i^\zeta$ cannot be a permutation: either $h_i^\zeta$ has at least two $*$-inputs, both of which are set to $\zeta_i$ in $g_i^\zeta$; or it has at least two $\eta_i$-inputs.

Thus $g_1^\zeta,\dots,g_m^\zeta$ conform to some certificate $\rho^\zeta$. If $i\in\dom\rho^\zeta$ then $g_i^\zeta$ is constant, and so either $i \in C$ or $\zeta_i = \eta_i$; in both cases, $\rho^\zeta_i = \zeta_i$.
In particular, if $\zeta_{i_0} \neq \eta_{i_0}$ then $i_0 \notin \dom\rho^\zeta$. Since $\rho^\zeta$ agrees with $\zeta$ on its domain, this implies that $\zeta|_{i_0 \gets \eta_{i_0}} \in P$. Therefore $P$ is closed under setting $i_0$ to $\eta_{i_0}$, which is one of the cases in the theorem.

\smallskip

Suppose now that $C = \emptyset$ and $|\Sigma_i| = 2$ for all $i$. This means that $h_i(\bar{\eta}_i) = \eta_i$ for all $i$.

Consider any $\zeta \in P$. As before, the functions $g_1^\zeta,\dots,g_m^\zeta$ are a polymorphism and so either of dictatorial type or of certificate type.
If $g_1^\zeta,\dots,g_m^\zeta$ are of dictatorial type then $g_i^\zeta$ is a permutation for all $i$, and so $\zeta_i = \bar{\eta}_i$ for all $i$; in short, $\zeta = \bar{\eta}$. In all other cases, $g_1^\zeta,\dots,g_m^\zeta$ is of certificate type.

Suppose therefore that $\zeta \in P$ is different from $\bar{\eta}$. Then $g_1^\zeta,\dots,g_m^\zeta$ conform to a certificate $\rho^\zeta$. If $i\in\dom\rho^\zeta$ then $\zeta_i = \eta_i$. Equivalently, if $\zeta_i \neq \eta_i$ then $i \notin \dom\rho^\zeta$. Since $\rho^\zeta$ agrees with $\zeta$ on its domain, this shows that:
\begin{equation} \label{eq:reduction-3}
\zeta \in P \text{ and } \zeta \neq \bar{\eta} \Longrightarrow \zeta|_{i \gets \eta_i} \in P \text{ for all } i.
\end{equation}

This property allows us to construct a polymorphism $e_1,\dots,e_m$ of AND/OR type, which is one of the cases in the statement of the theorem: $e_i(x) = \bar{\eta}_i$ if $x_1 = x_2 = \bar{\eta}_i$, and $e_i(x) = \eta_i$ otherwise. To see that this is indeed a polymorphism, let $y,z \in P$. If $y = z = \bar{\eta}$ then $(e_1(y_1,z_1),\dots,e_m(y_m,z_m)) = \bar{\eta} \in P$. Otherwise, suppose without loss of generality that $z \neq \bar{\eta}$. Observe that $(e_1(y_1,z_1),\dots,e_m(y_m,z_m))$ is obtained from $z$ be setting to $\eta_i$ coordinates $i$ such that $y_i = \eta_i$. Therefore \Cref{eq:reduction-3} implies that $(e_1(y_1,z_1),\dots,e_m(y_m,z_m)) \in P$.

\subsection*{For all $i,\sigma$ we have $h_i(\sigma) = *$}

The dictatorial case of \Cref{lem:reduction-h} applies for all $y \in P$. Since $P$ is non-degenerate, for every $i,\sigma$ we can find $y \in P$ such that $y_i = \sigma$, and so $f_i|_\sigma$ is a permutation.

So far we have restricted $f_1,\dots,f_m$ to a $1$-ary polymorphism according to the second argument. We can do the same, but according to the first argument. Running the argument so far, one of the following happens:
\begin{itemize}
    \item The argument shows that $f_1,\dots,f_m$ is of dictatorial type or of certificate type.
    \item The argument shows that one of the cases in the statement of the theorem holds.
    \item The argument reaches the current case ($h_i(\sigma) = *$ for all $i,\sigma$).
    In this case we immediately conclude that $f_1,\dots,f_m$ is a Latin square polymorphism conforming to $\Phi$.
\end{itemize}

\subsection*{Furthermore part}

Suppose that $\Sigma_1 = \cdots = \Sigma_m = \{0, 1\}$ and $P$ is not $\Phi$-trivial for $n = 1$. Then there is a $1$-ary polymorphism $f_1,\dots,f_m$ of $P$ which is neither of dictatorial type nor of certificate type.

Since $\Sigma_1 = \cdots = \Sigma_m = \{0, 1\}$, each $f_i$ is one of the functions $0,1,x,\bar{x}$. Let $g_i(x) = f_i(f_i(x)) \in \{0,1,x\}$. Clearly $g_1,\dots,g_m$ is a polymorphism of $P$. We now consider two cases, according to whether $g_1,\dots,g_m$ are of dictatorial type or not.

If $g_1,\dots,g_m$ are of dictatorial type then $g_1 = \cdots = g_m = x$ and so $f_1,\dots,f_m \in \{x, \bar{x}\}$, hence $f_1,\dots,f_m$ are the claimed polymorphism.

Suppose next that $g_1,\dots,g_m$ are not of dictatorial type. They cannot conform to any certificate since $f_1,\dots,f_m$ would conform to the same certificate. Hence $g_1,\dots,g_m$ are the claimed polymorphism.

\section{Symmetric binary predicates: triviality}
\label{sec:symmetric}

In this section we determine which non-degenerate symmetric predicates over $\{0,1\}$ are trivial:

\thmsymmetric*

We remind the reader that $\Phi_\id = \{(\id,\dots,\id)\}$ and $\Phi_\neg = \{\id,\neg\}^m$ (where $m$ is the arity of $P$). Here $\id(x) = x$ and $\neg(x) = \bar{x}$.

\subsection*{Necessity}

The easy part of the proof is showing that $P$ is not $\Phi_{\neg}$-trivial or not $\Phi_{\id}$-trivial in the stated cases. In the cases listed for $\Phi_{\neg}$, we exhibit a $2$-ary polymorphism in which all functions depend on both coordinates:

\begin{itemize}
    \item $P$ consists of all vectors of even parity.

    The polymorphism is $f_1(x) = \cdots = f_m(x) = x_1 \oplus x_2$.

    \item $P$ consists of all vectors of odd parity.

    The polymorphism is $f_1(x) = \cdots = f_{m-1}(x) = x_1 \oplus x_2$ and $f_m(x) = \overline{x_1 \oplus x_2}$.

    We could also take the $3$-ary polymorphism $f_1(x) = \cdots = f_m(x) = x_1 \oplus x_2 \oplus x_3$, which has the advantage that all functions are the same.

    \item $P$ consists of all vectors of weight at least $w$, possibly with the addition of the all-zero vector.

    The polymorphism is $f_1(x) = \cdots = f_m(x) = x_1 \lor x_2$.

    \item $P$ consists of all vectors of weight at most $w$, possibly with the addition of the all-one vector.

    The polymorphism is $f_1(x) = \cdots = f_m(x) = x_1 \land x_2$.
\end{itemize}

Finally, if $P$ is closed under complementation, then it is not $\Phi_\id$-trivial due to the $1$-ary polymorphism $f_1(x) = \cdots = f_m(x) = \bar{x}$.

\subsection*{Sufficiency}

Let $\Phi \in \{\Phi_\id, \Phi_{\neg}\}$, and suppose that $P$ is not $\Phi$-trivial. Our goal is to show that one of the cases in the statement of the theorem holds. It will be useful to represent $P$ by the set $W$ of Hamming weights of vectors in $P$.

Applying \Cref{thm:main,thm:reduction}, one of the following cases must hold:

\begin{enumerate}
    \item There exists a $1$-ary polymorphism $f_1,\dots,f_m$ where $f_i(x) \in \{0,1,x\}$, other than $f_1 = \cdots = f_m = x$, which is not of certificate type.

    \item When $\Phi = \Phi_\id$: There exists a $1$-ary polymorphism $f_1,\dots,f_m$ where $f_i(x) \in \{x,\bar{x}\}$, other than $f_1 = \cdots = f_m = x$.

    \item $P$ is closed under setting $i$ to $b$, for some $i,b$.

    \item $P$ has an AND/OR polymorphism.

    \item $P$ has a Latin square polymorphism.
\end{enumerate}

We consider each of these cases below.
The argument for the second case also proves the ``furthermore'' clause of the theorem.

\paragraph{$P$ is closed under setting some coordinates to constants, and the constant coordinates do not constitute a certificate.}

Suppose that for all $y \in P$, if we set the first $a_0$ coordinates to $0$ and the last $a_1$ coordinates to $1$ then the resulting vector is also in $P$, where $a_0 + a_1 > 0$. We denote this operation by $\mathcal{O}(y)$.
Since the constant coordinates do not constitute a certificate, there is some $y_0 \notin P$ such that $\mathcal{O}(y_0) \notin P$.

For any $y \in \{0,1\}^m$, the weight of $\mathcal{O}(y)$ is always  in the range $\{a_1, \dots, m-a_0\}$. Therefore $W \cap \{a_1, \dots, m-a_0\} \neq \emptyset$. Considering the vector $\mathcal{O}(y_0)$, also $W \cap \{a_1, \dots, m-a_0\} \neq \{a_1, \dots, m-a_0\}$. These properties will lead to a contradiction if $a_0, a_1 > 0$, and will allow us to uncover the structure of $W$ if $a_0 = 0$ or $a_1 = 0$.

The main observation is the following lemma.

\begin{lemma} \label{lem:symmetric-const}
Let $w \in W$ be such that $w = a_1 + b_1$ and $m-w = a_0 + b_0$, where $b_0,b_1 \geq 0$ (equivalently, $w \in \{a_1,\dots,m-a_0\}$).

If $a_0,b_1 > 0$ then $w - 1 \in W$. If $a_1,b_0 > 0$ then $w + 1 \in W$.
\end{lemma}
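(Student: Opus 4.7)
The plan is to exploit the symmetry of $P$ together with the closure operation $\mathcal{O}$. Since $P$ is symmetric, whether a vector $y \in \{0,1\}^m$ belongs to $P$ depends only on its Hamming weight, so to witness any weight $w' \in W$ it suffices to exhibit some vector $y \in P$ of weight $w'$. I will manufacture such a witness by carefully placing the ones of a weight-$w$ vector inside the three regions defined by $\mathcal{O}$ (the first $a_0$ coordinates, the last $a_1$ coordinates, and the middle $m - a_0 - a_1 = b_0 + b_1$ coordinates), then computing the weight of $\mathcal{O}(y)$.

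Fix some $y \in P$ of weight $w$ and suppose the first $a_0$ coordinates contain $u$ ones while the last $a_1$ coordinates contain $v$ ones. Then $\mathcal{O}(y)$ has weight $w - u + (a_1 - v)$, so to land at weight $w-1$ I want $u + v = a_1 + 1$, and to land at weight $w+1$ I want $u + v = a_1 - 1$. By symmetry, I am free to permute $y$ arbitrarily, so any $(u,v)$ with $0 \le u \le a_0$, $0 \le v \le a_1$, and such that the remaining $w - u - v$ ones fit in the middle $b_0 + b_1$ coordinates is achievable.

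For the first claim, assuming $a_0, b_1 > 0$, take $u = 1$ and $v = a_1$. This needs $a_0 \ge 1$ (given), and the middle must accommodate $w - 1 - a_1 = b_1 - 1$ ones among $b_0 + b_1$ positions, which is fine since $b_1 \ge 1$. The resulting vector $y$ has weight $w$, hence lies in $P$, and $\mathcal{O}(y) \in P$ has weight $w - 1$, so $w - 1 \in W$. For the second claim, assuming $a_1, b_0 > 0$, take $u = 0$ and $v = a_1 - 1$. This needs $a_1 \ge 1$ (given), and the middle must accommodate $w - (a_1 - 1) = b_1 + 1$ ones among $b_0 + b_1$ positions, leaving $b_0 - 1 \ge 0$ zeros there, which is fine since $b_0 \ge 1$. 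Then $\mathcal{O}(y) \in P$ has weight $w + 1$, so $w + 1 \in W$.

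There is no real obstacle: once one sets up the bookkeeping $w = a_1 + b_1$, $m - w = a_0 + b_0$, the two constructions are dictated by the hypotheses, with the inequalities $a_0, b_1 > 0$ and $a_1, b_0 > 0$ precisely ensuring that the desired arrangement of ones and zeros in the three regions exists.
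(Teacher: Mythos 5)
Your proof is correct and essentially reproduces the paper's argument: the specific witness you build for the $w-1$ case (one $1$ among the first $a_0$ coordinates, $b_1-1$ ones among the middle coordinates, all ones among the last $a_1$) is exactly the vector $0^{a_0-1}1\,0^{b_0+1}1^{b_1-1}\,1^{a_1}$ the paper uses, and symmetrically for $w+1$. The framing in terms of $(u,v)$ is a little more systematic but leads to the same construction.
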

\begin{proof}
If $a_0,b_1 > 0$ then consider $y = 0^{a_0-1} 1 \; 0^{b_0+1} 1^{b_1-1} \; 1^{a_1} \in P$.
We have $\mathcal{O}(y) = 0^{a_0} \; 0^{b_0+1} 1^{b_1-1} \; 1^{a_1} \in P$, and so $w - 1 \in W$.

The argument in the case $a_1,b_0 > 0$ is completely analogous.
\end{proof}

Suppose first that $a_0,a_1 > 0$. The lemma implies that if $w-1,w \in \{a_1,\dots,m-a_0\}$ and $w \in W$ then $w-1 \in W$; and that if $w,w+1 \in \{a_1,\dots,m-a_0\}$ and $w \in W$ then $w+1 \in W$. It follows that $W \cap \{a_1,\dots,m-a_0\}$ is either empty or contains all of $\{a_1,\dots,m-a_0\}$, and so we reach a contradiction.

\smallskip

Suppose next that $a_1 = 0$. In this case the lemma still implies that if $w \in \{1,\dots,m-a_0\}$ and $w \in W$ then $w-1 \in W$. Therefore $W \cap \{0,\dots,m-a_0\} = \{0,\dots,\hat w\}$, where $0 \leq \hat w < m-a_0$. In particular, $m-a_0 \notin W$.

We claim that $W = \{0,\dots,\hat w\}$, which is one of the cases in the statement of the theorem (note that $\hat w \neq 0$ due to non-degeneracy). Indeed, suppose that $w \in W$ for some $w > m-a_0$. Then $y = 0^{m-w} 1^{a_0-(m-w)} \; 1^{m-a_0} \in P$, and so $\mathcal{O}(y) = 0^{a_0} \; 1^{m-a_0} \in P$. However, this implies that $m-a_0 \in W$, and we reach a contradiction.

\smallskip

Similarly, if $a_0 = 0$ then $W = \{\hat w,\dots,m\}$ for some $a_1 < \hat w < m$.

\paragraph{When $\Phi = \Phi_\id$: $P$ is closed under XORing with some $v \neq 0$.}

If $v = 1^m$ then $P$ is closed under complementation, which is one of the cases in the statement of the theorem. Therefore we can assume that $0 < |v| < m$ (where $|v|$ is the Hamming weight of $v$).

By symmetry, $P$ is closed under XORing with any vector of Hamming weight $|v|$. In particular, it is invariant under XORing with both $01 1^{|v|-1} 0^{m-|v|-1}$ and $101^{|v|-1} 0^{m-|v|-1}$, and so under XORing with their XOR, which is $11 0^{m-2}$.
This implies that if $w \in W$ satisfies $w \geq 2$ then $w-2 \in W$: we get this by considering $1^w 0^{m-w} \in P$. Similarly, if $w \in W$ satisfies $w \leq m-2$ then $w+2 \in W$, considering $0^{m-w} 1^w \in P$.

Thus $W$ either contains all odd numbers in $\{0,\dots,m\}$ or none of them, and similarly it either contains all even numbers in $\{0,\dots,m\}$ or none of them. Since $P$ is non-degenerate, this implies that $P$ consists of either all vectors of even parity or of all vectors of odd parity, both of which are cases in the statement of the theorem.

\smallskip

This argument shows that if $P$ is closed under XORing with $v \neq 0$ then either $v = 1^m$ or $P$ consists of all vectors of even parity or of all vectors of odd parity, and consequently it is not $\Phi_{\neg}$-trivial. This proves the ``furthermore'' clause of the theorem.

\paragraph{$P$ is closed under setting a single coordinate to a constant.}

Suppose first that $P$ is closed under setting a single coordinate to $0$. This implies that if $w \in W$ is positive then also $w-1 \in W$, and so $W = \{0, \dots, \hat{w}\}$ for some $\hat{w}$.

Similarly, if $P$ is closed under setting a single coordinate to $1$ then $W = \{\hat{w}, \dots, m\}$ for some $\hat{w}$.

\paragraph{$P$ has an AND/OR polymorphism.}

Recall that an AND/OR polymorphism is a $2$-ary polymorphism $f_1,\dots,f_m$ were for every $i$, either $f_i(x) = x_1 \land x_2$ or $f_i(x) = x_1 \lor x_2$.

\smallskip

Suppose first that there are two different coordinates $i_1,i_2$ such that $f_{i_1}(x) = f_{i_2}(x) = x_1 \land x_2$. If $w \in W$ is such that $0 < w < m$ then we can find a vector $y \in P$ of weight $w$ such that $(y_{i_1},y_{i_2}) = (0,1)$. Let $z \in P$ be obtained from $w$ by switching coordinates $i_1$ and $i_2$. Then $(f_1(y_1,z_1),\dots,f_m(y_m,z_m)) = y|_{i_2 \gets 0}$ has weight $w-1$, showing that $w-1 \in P$. Thus either $W = \{0,\dots,\hat{w}\}$ or $W = \{0,\dots,\hat{w}\} \cup \{m\}$, for some $\hat{w}$, both of which are cases in the statement of the theorem.

\smallskip

Similarly, if there are two different coordinates $i_1,i_2$ such that $f_{i_1}(x) = f_{i_2}(x) = x_1 \lor x_2$ then either $W = \{\hat{w},\dots,m\}$ or $W = \{0\} \cup \{\hat{w},\dots,m\}$ for some $\hat{w}$.

\smallskip

If none of these cases happens then $m \leq 2$. There are no non-degenerate predicates for $m = 1$. When $m = 2$, the non-degenerate predicates are:
\begin{itemize}
\item $W = \{0,1\}$: weight at most $1$.
\item $W = \{0,2\}$: weight at most $0$ together with the all-one vector, or weight at least $2$ together with the all-one vector.
\item $W = \{1,2\}$: weight at least $1$.
\end{itemize}
Each of these is one of the cases in the statement of the theorem.

\paragraph{$P$ has a Latin square polymorphism.}

Recall that a Latin square polymorphism is a $2$-ary polymorphism $f_1,\dots,f_m$ such that if we view $f_i$ as a $2 \times 2$ square then it is a Latin square. Thus $f_i(x) = x_1 \oplus x_2 \oplus v_i$ for some vector $v \in \{0,1\}^m$.

In this case $P$ is closed under the operation $(y,z) \mapsto y \oplus z \oplus v$. If $W = \{0,m\}$ then we are done since this is one of the cases in the statement of the theorem. Otherwise, $P$ contains some $z_0 \neq v, \bar{v}$. Thus $P$ is closed under XORing with $z_0 \oplus v \neq 0^m, 1^m$, and so as shown above, $P$ consists of all vectors of some fixed parity.

\section{Symmetric binary predicates: classification}
\label{sec:symmetric-classification}

In this section we determine all polymorphisms for all non-degenerate symmetric predicates over $\{0, 1\}$:

\thmsymmetricclassification*

\Cref{thm:symmetric} with $\Phi = \Phi_\neg$ immediately implies \Cref{itm:sc-trivial}. \Cref{itm:sc-odd,itm:sc-equal} are trivial. We prove the remaining items one by one.

\begin{proof}[Proof of \Cref{itm:sc-parity}]
This item is known in theoretical computer science, and can be easily proved using Fourier analysis. For the sake of completeness, we provide a combinatorial proof.

We start by verifying that polymorphisms of the stated form are indeed polymorphisms. Suppose that $x^{(1)},\dots,x^{(m)} \in \{0,1\}^n$ satisfy $x^{(1)}_j \oplus \cdots \oplus x^{(n)}_j = b$ for all $j$. Then
\[
 f_1(x^{(1)}) \oplus \cdots \oplus f_m(x^{(m)}) =
 b_1 \oplus \cdots \oplus b_m \oplus \bigoplus_{j \in J} \bigl(x^{(1)}_j \oplus \cdots \oplus x^{(m)}_j\bigr) = b_1 \oplus \cdots \oplus b_m \oplus |J| b = b. 
\]

In the other direction, let us say that a coordinate $j \in [n]$ is \emph{sensitive} if there exist a coordinate $i \in [m]$ and an input $x$ such that $f_i(x \oplus j) = \overline{f_i(x)}$, where $x \oplus j$ results from $x$ by flipping the $j$'th coordinate.

We claim that if $j$ is sensitive then in fact $f_i(x \oplus j) = \overline{f_i(x)}$ for all $i,x$. Denoting by $J$ the set of sensitive coordinates, this implies the claimed structure up to the condition on the $b_i$'s. The condition, in turn, follows from the calculation above.

To prove the claim, suppose that $f_{i'}(x' \oplus j) = \overline{f_{i'}(x')}$ for some $i',x'$. 
We first prove the claim for all $i \neq i'$. Since $m \ge 3$, we can construct inputs $x^{(1)},\dots,x^{(m)}$ such that $x^{(i')} = x'$, $x^{(i)} = x$, and $x^{(1)}_k \oplus \cdots \oplus x^{(m)}_k = b$ for all $k$. Since $f_1,\dots,f_m$ is a polymorphism, this implies that $f_1(x^{(1)}) \oplus \cdots \oplus f_m(x^{(m)}) = b$.
If we flip the $j$'th coordinate of $x^{(i)},x^{(i')}$ then the new input $y^{(1)},\dots,y^{(m)}$ still satisfies $y^{(1)}_j \oplus \cdots \oplus y^{(m)}_j = b$, and so $f_1(y^{(1)}) \oplus \cdots \oplus f_m(y^{(m)}) = b$. This implies that
\[
 f_i(x^{(i)}) \oplus f_{i'}(x^{(i')}) = f_i(y^{(i)}) \oplus f_{i'}(y^{(i')}) = f_i(x^{(i)} \oplus j) \oplus \overline{f_{i'}(x^{(i')})}.
\]
It follows that $f_i(x^{(i)} \oplus j) = \overline{f_i(x^{(i)})}$, as claimed.
The claim for $i = i'$ now follows by the same argument using a different $i'$.
\end{proof}

\begin{proof}[Proof of \Cref{itm:sc-atmost}]
The second claim follows from the first, so we only prove the first one.

Suppose first that $F_1,\dots,F_m$ are $(w+1)$-wise intersecting, and let $S_1,\dots,S_m$ be such that for all $j$, at most $w$ of the sets $S_1,\dots,S_m$ contain $j$. We need to show that $S_i \in F_i$ for at most $w$ many $i$'s. This holds since otherwise the families are not $(w+1)$-wise intersecting.

The other direction is similar. Suppose that $F_1,\dots,F_m$ are such that whenever $S_1,\dots,S_m$ are as before, then $S_i \in F_i$ for at most $w$ many $1$'s. We need to show that $F_1,\dots,F_m$ are $(w+1)$-wise intersecting. If not, suppose without loss of generality that $S_1 \in F_1, \dots, S_{w+1} \in F_{w+1}$ have empty intersection. Taking $S_{w+2} = \cdots = S_m = \emptyset$, we obtain a contradiction.
\end{proof}

\begin{proof}[Proof of \Cref{itm:sc-atmost-m}]
The second claim follows from the first, so we only prove the first one.

We start with the `if' direction. If $f_1 = \cdots = f_m$ is an AND then $f_1,\dots,f_m$ is a polymorphism of $P$ since $(1,\dots,1) \in P$ (this takes care of the case of a degenerate AND, which is the constant~$1$ function) and $P$ is closed under AND. If at least $m-w$ of $f_1,\dots,f_m$ are constant~$0$ then they are of certificate type, and in particular, a polymorphism.

We continue with the `only if' direction. The proof is in two steps. We first show that for each subset $I \subseteq [m]$ of size $w+2$, either all $f_i$ for $i \in I$ are equal to the same AND, or at least two are constant~$0$. The `only if' direction then easily follows.

\smallskip

We start with the first step, considering for simplicity $I = [w+2]$. We first observe that $P|_{[w+2]}$ consists of all vectors whose weight is not exactly~$w+1$. Therefore the families $F_1,\dots,F_{w+2} \subseteq 2^{[m]}$ corresponding to the functions $f_1,\dots,f_{w+2}$ satisfy the following condition: if $S_1,\dots,S_{w+2}$ are such that no $j$ is contained in exactly $w+1$ of the sets, that $S_i \in F_i$ cannot hold for exactly $w+1$ many $i$'s.

We first show that either at least two of the families are empty, or all families are non-empty. Indeed, suppose that $F_1,\dots,F_{w+1}$ are non-empty, and choose $S_1 \in F_1, \dots, S_{w+1} \in F_{w+1}$ arbitrarily. Let $S_{w+2} = S_1 \cap \cdots \cap S_{w+1}$. By construction, no $j$ is contained in exactly $w+1$ of the sets, and so $S_{w+2}$ must belong to $F_{w+2}$, showing that $F_{w+2}$ is non-empty as well. A similar argument rules out any other family being the only empty one.

If at least two of the families are empty then we are done, so assume that all families are non-empty, say witnessed by $S_i \in F_i$. Let $A \in F_{i_0}$ be a set of minimum size among all sets in all families. We will show that all $F_i$ consist of all sets containing $A$, and so $f_1(x) = \cdots = f_{w+2}(x) = \bigwedge_{j \in A} x_j$.

We start by showing that all sets in $F_i$ for $i \neq i_0$ contain $A$. Consider any $i_1 \neq i_0$ and any $B \in F_{i_1}$, choose some $i_2 \neq i_0,i_1$, and let $C = A \cap B \cap \bigcap_{i \neq i_0,i_1,i_2} S_i$. The sets $A,B,C,(S_i)_{i \neq i_0,i_1,i_2}$ satisfy the condition that no $j$ belongs to exactly $w+1$ of them, and so not exactly $w+1$ of them belong to their respective families. By construction, all sets other than possibly $C$ belong to their respective families, hence $C \in F_{i_2}$. This implies that $B \supseteq A$ by the definition of $A$.

Next, we show that all $F_i$ contain $A$. This is clear for $i = i_0$, so suppose that $i \neq i_0$. We repeat the argument above with $i_2 = i$, an arbitrary $i_1 \neq i_0,i$, and $B = S_{i_1}$. The argument shows that $C \in F_i$, where $C \subseteq A$. The definition of $A$ implies that $C = A$.
We can now replace $i_0$ with any other $i$ in the argument above to deduce that all sets in $F_{i_0}$ also contain $A$.

It remains to show that each $F_i$ consists of all sets containing $A$. Indeed, let $B \supseteq A$. We will show that $B \in F_{w+2}$; the same argument works for other $i$. Consider the sets $A,\dots,A,B$. Since $B \supseteq A$, every element $j$ belongs to either $1$ or $w+2$ of the families. Since $A \in F_1,\dots,F_{w+1}$, this implies that $B \in F_{w+2}$, as needed.

\smallskip

We proceed with the second step. We consider two cases. If $f_i$ is the constant~$0$ function for some $i$ then the constant case must hold for all $I$ (since the constant~$0$ function cannot be written as an AND), and so at least $m-w$ of the functions are constant~$0$. Otherwise, the AND case must hold for all $I$, and we are done.
\end{proof}

\section{Relation to impossibility domains}
\label{sec:DH}

In this section we relate $\Phi$-triviality to the notion of impossibility domain.

\thmDH*

One direction of the theorem is almost immediate: if $P$ is $\Phi$-trivial then it is an impossibility domain with respect to unanimity. We show this by proving the contrapositive: if $P$ is a possibility domain with respect to unanimity then it is not $\Phi$-trivial.

Suppose that $P$ is a possibility domain with respect to unanimity. Then there is a unanimous polymorphism $f_1,\dots,f_m$ which is not of the form $f_1(x) = \cdots = f_m(x) = x_j$.

We claim that $f_1,\dots,f_m$ is of neither dictatorial type nor certificate type, and so $P$ is not $\Phi$-trivial. We prove this by considering both cases.

Suppose first that $f_1,\dots,f_m$ are of dictatorial type: there exists $j \in [n]$ such that $f_i(x) = \phi_i(x_j)$. Since $f_i$ is unanimous, we have $\phi_i(b) = f_i(b,\dots,b) = b$, and so $f_i(x) = x_j$. This contradicts the assumption on $f_1,\dots,f_m$.

If $f_1,\dots,f_m$ conform to a certificate $\rho$ then $\dom\rho = \emptyset$, since no $f_i$ is constant (by unanimity). The empty certificate cannot be a certificate since $P$ is non-degenerate.

\medskip

The other direction is an application of our main theorems. We show that if $P$ is not $\Phi$-trivial then it is a possibility domain with respect to unanimity.

Since $P$ is not $\Phi$-trivial, according to \Cref{thm:main} it is not $\Phi$-trivial for $n = 2$.
Since it is $\Phi$-trivial for $n = 1$, one of the three cases in \Cref{thm:reduction} must hold. In each of these cases we will show that $P$ is a possibility domain with respect to unanimity by constructing a unanimous non-dictatorial polymorphism.

\begin{enumerate}
\item There is a coordinate $i_0 \in [m]$ and $\sigma_0 \in \Sigma_{i_0}$ such that $P$ is closed under setting $i_0$ to $\sigma_0$.

We construct unanimous functions $f_i\colon \Sigma_i^2 \to \Sigma_i$ as follows. If $i \neq i_0$ then $f_i(x) = x_1$. The function $f_{i_0}$ is defined as follows: $f_i(x) = x_1$ if $x_2 \neq \sigma_0$, and $f_i(x) = \sigma_0$ if $x_2 = \sigma_0$. Since $f_i$ depends on both arguments, $f_1,\dots,f_m$ is non-dictatorial. It remains to prove that $f_1,\dots,f_m$ is a polymorphism of $P$.

Suppose that $y,z \in P$. If $z_{i_0} \neq \sigma_0$ then $(f_1(y_1,z_1),\dots,f_m(y_m,z_m)) = y \in P$, and if $z_{i_0} = \sigma_0$ then $(f_1(y_1,z_1),\dots,f_m(y_m,z_m)) = y|_{i_0 \gets \sigma_0} \in P$.

\item $P$ has a non-dictatorial AND/OR polymorphism. That is, there are functions $f_i\colon \Sigma_i^2 \to \Sigma_i$ such that (i) if $|\Sigma_i| > 2$ then $f_i(x) = x_1$, (ii) if $|\Sigma_i| = 2$ then $f_i \in \{ x_1 \land x_2, x_1 \lor x_2 \}$, where we assume without loss of generality that $\Sigma_i = \{0, 1\}$ in this case.

Since AND/OR polymorphisms are unanimous, we are immediately done.

\item $P$ has a Latin square polymorphism. That is, there are functions $f_i\colon \Sigma_i^2 \to \Sigma_i$ where each $f_i$ is a Latin square (viewed as a matrix, each row and each column is a permutation).

Let $f_i(x) = \pi_{i,x_1}(x_2)$,  where $\pi_{i,x_1}$ is some permutation of $\Sigma_i$. Then $f_i^{\circ 2}(x_1,x_2,x_3) := f_i(x_1,f_i(x_2,x_3)) = \pi_{i,x_1}(\pi_{i,x_2}(x_3))$. Observe that $f_1^{\circ 2},\dots,f_m^{\circ 2}$ is also a polymorphism of $P$. Indeed, if $y,z,w \in P$ then $(f_1(z_1,w_1),\dots,f_m(z_m,w_m)) \in P$, and so $(f_1(y_1,f_1(z_1,w_1)),\dots,f_m(y_m,f_m(z_m,w_m)) \in P$.

More generally, we can define $f_i^{\circ r}\colon \Sigma_i^{r+1} \to \Sigma_i$ by the recursive formula $f_i^{\circ 1}(x) = f_i(x)$ and $f_i^{\circ r+1}(x) = f_i(x_1,f_i^{\circ r}(x_2,\dots,x_{r+1}))$. For every $r$, $f_1^{\circ r},\dots,f_m^{\circ r}$ is a polymorphism of $P$ which is not dictatorial. In order to complete the proof, we need to find $r$ for which the functions $f_i^{\circ r}$ are unanimous.

Observe that $f_i^{\circ r}(\sigma,\dots,\sigma) = \pi_{i,\sigma}^r(\sigma)$, where the power is taken in the group of symmetries of $\Sigma_i$. Taking $r = \prod_{i=1}^m |\Sigma_i|!$, we have $\pi_{i,\sigma}^r = \id$ and so $f_i$ is unanimous.
\end{enumerate}

\bibliographystyle{alphaurl}
\bibliography{biblio}

\begin{thebibliography}{BBKO21}

\bibitem[AGH17]{AGH17}
Per Austrin, Venkatesan Guruswami, and Johan H{\aa}stad.
\newblock {$(2+\varepsilon)$}-{S}at is {NP}-hard.
\newblock {\em SIAM J. Comput.}, 46(5):1554--1573, 2017.
\newblock \href {https://doi.org/10.1137/15M1006507}
  {\path{doi:10.1137/15M1006507}}.

\bibitem[Arr50]{Arrow50}
Kenneth~J. Arrow.
\newblock A difficulty in the theory of social welfare.
\newblock {\em J. of Political Economy}, 58:328--346, 1950.

\bibitem[Arr63]{Arrow63}
Kenneth~J. Arrow.
\newblock {\em Social choice and individual values}.
\newblock John Wiley and Sons, 1963.

\bibitem[BBKO21]{BBKO21}
Libor Barto, Jakub Bul\'in, Andrei Krokhin, and Jakub Opr{\v s}al.
\newblock Algebraic approach to promise constraint satisfaction.
\newblock {\em J. ACM}, 68(4):Art. 28, 66, 2021.
\newblock \href {https://doi.org/10.1145/3457606} {\path{doi:10.1145/3457606}}.

\bibitem[BG16]{BG16}
Joshua Brakensiek and Venkatesan Guruswami.
\newblock New hardness results for graph and hypergraph colorings.
\newblock In {\em 31st {C}onference on {C}omputational {C}omplexity}, volume~50
  of {\em LIPIcs. Leibniz Int. Proc. Inform.}, pages Art. No. 14, 27. Schloss
  Dagstuhl. Leibniz-Zent. Inform., Wadern, 2016.

\bibitem[BG21]{BG21}
Joshua Brakensiek and Venkatesan Guruswami.
\newblock Promise constraint satisfaction: algebraic structure and a symmetric
  {B}oolean dichotomy.
\newblock {\em SIAM J. Comput.}, 50(6):1663--1700, 2021.
\newblock \href {https://doi.org/10.1137/19M128212X}
  {\path{doi:10.1137/19M128212X}}.

\bibitem[BJ03]{BulatovJeavons03}
Andrei~A. Bulatov and Peter Jeavons.
\newblock An algebraic approach to multi-sorted constraints.
\newblock In {\em International Conference on Principles and Practice of
  Constraint Programming}, 2003.
\newblock URL: \url{https://api.semanticscholar.org/CorpusID:990270}.

\bibitem[BK25]{BartoKapytka25}
Libor Barto and Maryia Kapytka.
\newblock Multisorted {B}oolean clones determined by binary relations up to
  minion homomorphisms.
\newblock {\em Algebra Universalis}, 86(1):Paper No. 1, 36, 2025.
\newblock \href {https://doi.org/10.1007/s00012-024-00878-0}
  {\path{doi:10.1007/s00012-024-00878-0}}.

\bibitem[DH09]{DokowHolzman09}
Elad Dokow and Ron Holzman.
\newblock Aggregation of binary evaluations for truth-functional agendas.
\newblock {\em Soc. Choice Welf.}, 32(2):221--241, 2009.
\newblock \href {https://doi.org/10.1007/s00355-008-0320-1}
  {\path{doi:10.1007/s00355-008-0320-1}}.

\bibitem[DH10a]{DokowHolzman10a}
Elad Dokow and Ron Holzman.
\newblock Aggregation of binary evaluations.
\newblock {\em J. Econom. Theory}, 145(2):495--511, 2010.
\newblock \href {https://doi.org/10.1016/j.jet.2007.10.004}
  {\path{doi:10.1016/j.jet.2007.10.004}}.

\bibitem[DH10b]{DokowHolzman10b}
Elad Dokow and Ron Holzman.
\newblock Aggregation of binary evaluations with abstentions.
\newblock {\em J. Econom. Theory}, 145(2):544--561, 2010.
\newblock \href {https://doi.org/10.1016/j.jet.2009.10.015}
  {\path{doi:10.1016/j.jet.2009.10.015}}.

\bibitem[DH10c]{DokowHolzman10c}
Elad Dokow and Ron Holzman.
\newblock Aggregation of non-binary evaluations.
\newblock {\em Adv. in Appl. Math.}, 45(4):487--504, 2010.
\newblock \href {https://doi.org/10.1016/j.aam.2010.02.005}
  {\path{doi:10.1016/j.aam.2010.02.005}}.

\bibitem[DL07]{DietrichList07}
Franz Dietrich and Christian List.
\newblock Arrow's theorem in judgment aggregation.
\newblock {\em Soc. Choice Welf.}, 29(1):19--33, 2007.
\newblock \href {https://doi.org/10.1007/s00355-006-0196-x}
  {\path{doi:10.1007/s00355-006-0196-x}}.

\bibitem[DL13]{DietrichList13}
Franz Dietrich and Christian List.
\newblock Propositionwise judgment aggregation: the general case.
\newblock {\em Soc. Choice Welf.}, 40(4):1067--1095, 2013.
\newblock \href {https://doi.org/10.1007/s00355-012-0661-7}
  {\path{doi:10.1007/s00355-012-0661-7}}.

\bibitem[FR86]{FishburnRubinstein86}
Peter~C. Fishburn and Ariel Rubinstein.
\newblock Aggregation of equivalence relations.
\newblock {\em J. Classification}, 3:61--65, 1986.

\bibitem[Gei68]{Geiger68}
David Geiger.
\newblock Closed systems of functions and predicates.
\newblock {\em Pacific J. Math.}, 27:95--100, 1968.
\newblock URL: \url{http://projecteuclid.org/euclid.pjm/1102985564}.

\bibitem[Gib14]{Gibbard14}
Allan~F. Gibbard.
\newblock Intransitive social indifference and the {A}rrow dilemma.
\newblock {\em Review of Economic Design}, 18:3--10, 2014.

\bibitem[JM59]{JanovMucnik59}
Ju.~I. Janov and A.~A. Mu{\v c}nik.
\newblock Existence of {$k$}-valued closed classes without a finite basis.
\newblock {\em Dokl. Akad. Nauk SSSR}, 127:44--46, 1959.

\bibitem[LP02]{ListPettit02}
Christian List and Philip Pettit.
\newblock Aggregating sets of judgments: An impossibility result.
\newblock {\em Econ. Philos.}, 18:89--110, 2002.

\bibitem[LPW18]{LPW18}
Erkko Lehtonen, Reinhard P\"oschel, and Tam\'as Waldhauser.
\newblock Reflections on and of minor-closed classes of multisorted operations.
\newblock {\em Algebra Universalis}, 79(3):Paper No. 71, 19, 2018.
\newblock \href {https://doi.org/10.1007/s00012-018-0549-1}
  {\path{doi:10.1007/s00012-018-0549-1}}.

\bibitem[Mos09]{Mossel09}
Elchanan Mossel.
\newblock Arrow's impossibility theorem without unanimity, 2009.
\newblock URL: \url{https://arxiv.org/abs/0901.4727}, \href
  {https://arxiv.org/abs/0901.4727} {\path{arXiv:0901.4727}}.

\bibitem[Mos12]{Mossel12}
Elchanan Mossel.
\newblock A quantitative {A}rrow theorem.
\newblock {\em Probab. Theory Related Fields}, 154(1-2):49--88, 2012.
\newblock \href {https://doi.org/10.1007/s00440-011-0362-7}
  {\path{doi:10.1007/s00440-011-0362-7}}.

\bibitem[NP02]{NehringPuppe02}
Klaus Nehring and Clemens Puppe.
\newblock Strategy-proof social choice on single-peaked domains: possibility,
  impossibility and the space between.
\newblock Mimeo, 2002.

\bibitem[Pip02]{Pippenger02}
Nicholas Pippenger.
\newblock Galois theory for minors of finite functions.
\newblock {\em Discrete Math.}, 254(1-3):405--419, 2002.
\newblock \href {https://doi.org/10.1016/S0012-365X(01)00297-7}
  {\path{doi:10.1016/S0012-365X(01)00297-7}}.

\bibitem[Pos20]{Post20}
Emil~L. Post.
\newblock Determination of all closed systems of truth tables.
\newblock {\em Bull. Amer. Math. Soc.}, 26:437, 1920.

\bibitem[Pos42]{Post42}
Emil~L. Post.
\newblock {\em The Two-Valued Iterative Systems of Mathematical Logic}.
\newblock Princeton University Press, Princeton, 1942.
\newblock URL: \url{https://doi.org/10.1515/9781400882366} [cited 2025-02-07],
  \href {https://doi.org/doi:10.1515/9781400882366}
  {\path{doi:doi:10.1515/9781400882366}}.

\bibitem[RF86]{RubinsteinFishburn86}
Ariel Rubinstein and Peter~C. Fishburn.
\newblock Algebraic aggregation theory.
\newblock {\em J. Economic Theory}, 38(1):63--77, February 1986.

\bibitem[SX15]{SzegedyXu15}
Mario Szegedy and Yixin Xu.
\newblock Impossibility theorems and the universal algebraic toolkit.
\newblock Papers 1506.01315, arXiv.org, June 2015.
\newblock URL: \url{https://ideas.repec.org/p/arx/papers/1506.01315.html}.

\bibitem[Wil72]{Wilson72}
Robert Wilson.
\newblock Social choice theory without the {P}areto principle.
\newblock {\em J. of Economic Theory}, 5:478--486, 1972.

\bibitem[Yab54]{Yablonski54}
S.~V. Yablonski{\u \i}.
\newblock On functional completeness in a three-valued calculus.
\newblock {\em Doklady Akad. Nauk SSSR (N.S.)}, 95:1153--1155, 1954.

\bibitem[Zhu15]{Zhuk15}
Dmitriy Zhuk.
\newblock The lattice of all clones of self-dual functions in three-valued
  logic.
\newblock {\em J. Mult.-Valued Logic Soft Comput.}, 24(1-4):251--316, 2015.

\end{thebibliography}

\if01
\section*{Literature summary}

\begin{itemize}
    \item Arrow~\cite{Arrow50} required monotonicity. Arrow~\cite{Arrow63} only required unanimity (``unanimity'' is also known as ``Pareto condition'', ``idempotence'', ``constancy'', ``faithfulness'', ``systematicity''). Wilson~\cite{Wilson72} considered some weakening, and Mossel~\cite{Mossel09} (later incorporated to Mossel~\cite{Mossel12}) proved the general case (without any assumptions).
    \item Dokow and Holzman~\cite{DokowHolzman10a} consider the binary case (alphabet $\{0, 1\}$) and characterized the predicates whose only unanimous polymorphisms are dictators (the easy direction [necessity] also appears in Dietrich and List~\cite{DietrichList07}). This follows up on an earlier result of Nehring and Puppe~\cite{NehringPuppe02} which also assumed monotonicity. Nehring and Puppe showed that triviality is equivalent to \emph{total blockedness}. In the more general setting of Dokow and Holzman, we also need to assume that the predicate is not an affine subspace. Similar characterizations under various definitions of ``impossibility domain'' are in Dietrich and List~\cite{DietrichList13}.

    Dokow and Holzman~\cite{DokowHolzman09} considered predicates of the form $(x,f(x))$, and determined all unanimous polymorphisms. They also showed that if we replace unanimity by nonconstant then the only new polymorphisms we get are anti-dictators.

    Dokow and Holzman~\cite{DokowHolzman10c} considered predicates over larger alphabets, under either unanimity or supportiveness. They gave sufficient conditions for dictators to be the only solutions (for supportiveness), and proved their necessity for alphabets of size~$3$. Their work was completed by Szegedy and Xu~\cite{SzegedyXu15} using completely different techniques, who gave a reduction to the case $n = 3$ (for supportiveness) and $n = |\Sigma|$ (for idempotence). In the binary case, they also proved that if a domain supports non-dictatorial polymorphisms then it supports polymorphisms in which each function is one of AND, OR, 3MAJ, 3XOR.

    Dokow and Holzman~\cite{DokowHolzman10b} considered binary evaluations with abstentions, meaning that the inputs are over $0,1,*$. Every input needs to be consistent with a full assignment. The output is either also $0,1,*$ or must be $0,1$ (in this case the input and output predicates are different!). Apart from dictators, we also allow oligarchies. A related work is that of Gibbard~\cite{Gibbard14}.

    \item Rubinstein and Fishburn~\cite{RubinsteinFishburn86,FishburnRubinstein86} consider predicates over arbitrary alphabets, as well as the problem of determining for which ones are all unanimous polymorphisms dictatorial. They also have some initial results, such as classifying polymorphisms of affine subspaces. The second paper considers a particular predicate (equivalence relations), determining all polymorphisms (conjunctions).

    A similar but more restricted setting was considered later by List and Pettit~\cite{ListPettit02}. [This is less related]
\end{itemize}
\fi

\end{document}